\renewcommand{\bar}{\overline}
\renewcommand{\hat}{\widehat}
\renewcommand{\tilde}{\widetilde}
\newtheorem{thm}{Theorem}[section]
\newtheorem{lem}[thm]{Lemma}
\theoremstyle{definition}
\newcommand{\scr}[1]{\mathscr #1}
\definecolor{wco}{rgb}{0.5,0.2,0.3}
\numberwithin{equation}{section} 
\newtheorem{rem}{Remark}[section]
\newcommand{\ua}{\uparrow}
\title{{\bf Weak convergence of path-dependent SDEs driven by fractional Brownian motion with irregular coefficients}
}
\author{
{\bf  Yongqiang Suo$^{a)}$, Chenggui Yuan$^{a)}$, Shao-Qin Zhang$^{b)}$
 }\\
\footnotesize{a) Department of Mathematics, Swansea University, Bay campus, SA1 8EN, UK}\\
\footnotesize{b) School of Statistics and Mathematics}\\
 \footnotesize{ Central University of Finance and Economics, Beijing 100081, China}
}
\begin{document}

\def\1{\mathds{1}}
\def\A{\mathscr{A}}
\def\G{\mathscr{G}}
\def\eq{\equation}
\def\bg{\begin}
\def\ep{\epsilon}
\def\x{\|x\|}
\def\y{\|y\|}
\def\xr{\|x\|_r}
\def\xrr{(\sum_{i=1}^T|x_i|^r)^{\frac{1}{r}}}
\def\R{\mathbb R}
\def\ff{\frac}
\def\ss{\sqrt}
\def\B{\mathbf B}
\def\N{\mathbb N}
\def\kk{\kappa} \def\m{{\bf m}}
\def\dd{\delta} \def\DD{\Dd} \def\vv{\varepsilon} \def\rr{\rho}
\def\<{\langle} \def\>{\rangle} \def\GG{\Gamma} \def\gg{\gamma}
  \def\nn{\nabla} \def\pp{\partial} \def\EE{\scr E}
\def\d{\text{\rm{d}}} \def\bb{\beta} \def\aa{\alpha} \def\D{\scr D}
  \def\si{\sigma} \def\ess{\text{\rm{ess}}}\def\lam{\lambda}
\def\beg{\begin} \def\beq{\begin{equation}}  \def\F{\scr F}
\def\Ric{\text{\rm{Ric}}} \def \Hess{\text{\rm{Hess}}}
\def\e{\text{\rm{e}}} \def\ua{\underline a} \def\OO{\Omega}  \def\oo{\omega}
 \def\tt{\tilde} \def\Ric{\text{\rm{Ric}}}
\def\cut{\text{\rm{cut}}} \def\P{\mathbb P} \def\ifn{I_n(f^{\bigotimes n})}
\def\C{\scr C}      \def\alphaa{\mathbf{r}}     \def\r{r}
\def\gap{\text{\rm{gap}}} \def\prr{\pi_{{\bf m},\varrho}}  \def\r{\mathbf r}
\def\Z{\mathbb Z} \def\vrr{\varrho} \def\l{\lambda}
\def\L{\scr L}\def\Tilde{\tilde} \def\TILDE{\tilde}\def\II{\mathbb I}
\def\i{{\rm in}}\def\Sect{{\rm Sect}}\def\E{\mathbb E} \def\H{\mathbb H}
\def\M{\scr M}\def\Q{\mathbb Q} \def\texto{\text{o}} \def\LL{\Lambda}
\def\Rank{{\rm Rank}} \def\B{\scr B} \def\i{{\rm i}} \def\HR{Hat{\R}^d}
\def\to{\rightarrow}\def\l{\ell}\def\ll{\lambda}
\def\8{\infty}\def\ee{\epsilon} \def\Y{\mathbb{Y}} \def\lf{\lfloor}
\def\rf{\rfloor}\def\3{\triangle}\def\H{\mathbb{H}}\def\S{\mathbb{S}}
\def\va{\varphi}

\def\R{\mathbb R}  \def\B{\mathbf
B}
\def\N{\mathbb N} \def\kk{\kappa} \def\m{{\bf m}}
\def\dd{\delta} \def\DD{\Delta} \def\vv{\varepsilon} \def\rr{\rho}
\def\<{\langle} \def\>{\rangle} \def\GG{\Gamma} \def\gg{\gamma}
  \def\nn{\nabla} \def\pp{\partial} \def\EE{\scr E}
\def\d{\text{\rm{d}}} \def\bb{\beta} \def\aa{\alpha} \def\D{\scr D}
  \def\si{\sigma} \def\ess{\text{\rm{ess}}}
\def\beg{\begin} \def\beq{\begin{equation}}  \def\F{\scr F}
\def\Ric{\text{\rm{Ric}}} \def\Hess{\text{\rm{Hess}}}
\def\e{\text{\rm{e}}} \def\ua{\underline a} \def\OO{\Omega}  \def\oo{\omega}
 \def\tt{\tilde} \def\Ric{\text{\rm{Ric}}}
\def\cut{\text{\rm{cut}}} \def\P{\mathbb P} \def\ifn{I_n(f^{\bigotimes n})}
\def\C{\scr C}      \def\aaa{\mathbf{r}}     \def\r{r}
\def\gap{\text{\rm{gap}}} \def\prr{\pi_{{\bf m},\varrho}}  \def\r{\mathbf r}
\def\Z{\mathbb Z} \def\vrr{\varrho} \def\ll{\lambda}
\def\L{\scr L}\def\Tt{\tt} \def\TT{\tt}\def\II{\mathbb I}
\def\i{{\rm in}}\def\Sect{{\rm Sect}}\def\E{\mathbb E} \def\H{\mathbb H}
\def\M{\scr M}\def\Q{\mathbb Q} \def\texto{\text{o}} \def\LL{\Lambda}
\def\Rank{{\rm Rank}} \def\B{\scr B} \def\i{{\rm i}} \def\HR{\hat{\R}^d}
\def\to{\rightarrow}\def\l{\ell}
\def\8{\infty}\def\X{\mathbb{X}}\def\3{\triangle}
\def\V{\mathbb{V}}\def\M{\mathbb{M}}\def\W{\mathbb{W}}\def\Y{\mathbb{Y}}

\def\va{\varphi}
\def\l{\lambda}
\def\var{\varphi}
\renewcommand{\bar}{\overline}
\renewcommand{\hat}{\widehat}
\renewcommand{\tilde}{\widetilde}

\allowdisplaybreaks
\maketitle
\begin{abstract}
In this paper, by using  Girsanov's transformation and the property of the corresponding reference stochastic differential equations, we  investigate weak existence and uniqueness of solutions  and weak convergence of Euler-Maruyama scheme to stochastic functional differential equations with H\"older continuous drift driven by fractional Brownian motion with Hurst index $H\in (1/2,1)$. 
\end{abstract}
AMS Subject Classification: 60F10, 60H10, 34K26.

Keywords: Weak convergence, H\"older continuity, fractional Brownian motion

\section{Introduction}
The fractional Brownian motion (fBM) appears naturally in modeling stochastic systems with long-range dependence phenomena in applications.
Fractional Brownian motions with Hurst parameter $H\neq1/2$ are neither   Markov processes nor a (weak) semimartingales, 
which makes the study of stochastic differential equations (SDEs) driven by fBMs complicated. The existence and uniqueness of solutions to fractional equations  have received much attention.  \cite{L} obtained existence and uniqueness of solutions to SDEs driven by fBMs with Hurst parameter $H\in(\frac{1}{2},1)$ by using Young integrals (see \cite{Y}) and $p$-variation estimate; \cite{CQ} derived the existence and uniqueness result for $H\in(\frac{1}{4},\frac{1}{2})$ through the same rough-type arguments in \cite{L}; \cite{N} studied SDEs driven by fBMs by using  fractional calculus developed in \cite{Z}.  For more results on existence and uniqueness of solutions to SDEs driven by fBMs, we refere to \cite{BH,H,HN,LL,MS2,NO}  for instance.  Stochastic functional differential equations (SFDEs) are also used to characterise stochastic systems with memory effects.   For the existence and uniqueness of solutions for SFDEs with regular coefficients, one can consult \cite{F,MG,NN}.  In recent years, SDEs driven by fractional Brownian motion with irregular coefficients have received much attention, e.g.\cite{FZ,HN}.  However, for fractional SFDEs with irregular coefficients,  even the weak existence and uniqueness results are not well studied. So, we first study the weak existence and uniqueness for SFDEs driven by fBMs (see Theorem \ref{thm-ex-un} below), based on which we shall give a weak convergence result on the weak solution of path-dependent fractional SDEs with irregular drift (see Theorem \ref{thm1}).  By using the associated Kolmogorov equations, SDEs  with irregular coefficients driven by Brownian motion or L\'evy noise are intensively studied. However, this powerful tool seems hard to be applied to fractional SDEs.  To study weak solutions, we adopt  the Girsanov's transformation.   In the case of SDEs driven by fBMs, it involves fractional calculus  to ensure that the Girsanov's transformation  can be applied, and the related estimates are nontrivial for the irregular path-dependent drift.  

There is a few literature on the convergence of numerical schemes for SDEs driven by fBMs, e.g. \cite{HL, M,MS1,MS,MG,NA,SS}. Recently,  \cite{B}  developed a perturbation argument to investigate the weak convergence of path-dependent SDEs with irregular coefficients by using Girsanov's transformation. Based on our  weak existence and uniqueness result, we  investigate the weak convergence of truncated Euler-Maruyama (EM) scheme for SFDEs driven by fBMs. The drift is path-dependent and irregular, and the exponential integrability of functionals of segment process studied in our work involves fractional calculus, which is more complicated than SFDEs driven by Brownian motion. Explicit convergence order is given for the numerical scheme, and the main ingredient is giving exact estimates for fractional derivatives of functionals of the segment process truncated by  gridpoints, see Lemma \ref{lem1.35}.

The paper is organised as follows: Section 2 is devoted to the Preliminaries containing fractional calculus and fractional Brownian motion; in Section 3, we state our  main results on weak existence and uniqueness  and numerical approximation; proofs are provided in Section 4 and Section 5.

\section{Preliminaries}
\subsection{Fractional integrals and derivatives}
In this subsection, we recall some basic facts about fractional integrals and derivatives, for more details, see \cite{ND,S}.

Let $a,b\in\R$ with $a<b$. For $f\in L^1(a,b)$ and $\alpha>0$, the left-sided fractional Riemann-Liouville integral of order $\alpha$ of $f$  on $[a,b]$ is given by 
\begin{align*}
I_{a+}^\alpha f=\frac{1}{\Gamma(\alpha)}\int_a^x\frac{f(y)}{(x-y)^{1-\alpha}}\d y,
\end{align*}
where $x\in(a,b)$ a.e. $(-1)^{-\alpha}=\e^{-i\alpha\pi}$, $\Gamma$ denotes the Euler function. If $\alpha=n\in\N$, this definition coincides with the $n$-order iterated integrals of $f$. By the definitions, we have the first composition formula
\begin{align*}
I_{a+}^\alpha(I_{a+}^\beta f)=I_{a+}^{\alpha+\beta}f.
\end{align*}
Fractional differentiation may be introduced as an inverse operation. Let $\alpha\in(0,1)$ and $p\ge1$. If $f\in I_{a+}^\alpha(L^p([a,b],\R))$, the function $\phi$ satisfying $f=I_{a+}^\alpha\phi$ is unique in $L^p([a,b],\R)$ and it coincides with the left sided Riemann-Liouville derivative of $f$ of order $\alpha$ given by 
\begin{align*}
D_{a+}^\alpha f(x)=\frac{1}{\Gamma(1-\alpha)}\frac{\d}{\d x}\int_a^x\frac{f(y)}{(x-y)^\alpha}\d y.
\end{align*}
The corresponding Weyl representation reads as follows
\begin{align*}
D_{a+}^\alpha f(x)=\frac{1}{\Gamma(1-\alpha)}\Big(\frac{f(x)}{(x-a)^\alpha}+\alpha\int_a^x\frac{f(x)-f(y)}{(x-y)^{1+\alpha}}\d y\Big),
\end{align*}
where the convergence of the integrals at the singularity $y=x$ holds pointwise for almost all $x$ if $p=1$ and in the $L^p$ sense if $p>1$. By the construction, we have 
\begin{align*}
I_{a+}^\alpha(D_{a+}^\alpha f)=f, \qquad  f\in I_{a+}^\alpha(L^p([a,b],\R)),
\end{align*}
and moreover it holds the second composition formula
\begin{align*}
D_{a+}^\alpha(D_{a+}^\beta f)=D_{a+}^{\alpha+\beta}f,\qquad f\in I_{a+}^{\alpha+\beta}(L^1([a,b],\R)).
\end{align*}

\subsection{ Fractional Brownian motion}
To make the content self-contained, we first recall some basic facts about the stochastic calculus of variations with respect to the fBm with Hurst parameter $H\in(\frac{1}{2},1)$. We refer the reader to \cite{D} for further details.

Fixe $T>0$.  The $d$-dimensional fBm $B^H=\{B^H(t),t\in[0,T]\}$ with Hurst parameter $H$ on the probability space $(\Omega,\mathscr{F},\P)$ can be defined as the centered  Gauss  process  with covariance function 
\begin{align*}
\E(B_t^HB_s^H)=R_H(t,s)=\frac{1}{2}(t^{2H}+s^{2H}-|t-s|^{2H}).
\end{align*}
In particular, if $H=\frac{1}{2}$, $B^H$ is a Brownian motion. Besides, 
\begin{align*}
\E|B_t^H-B_s^H|^p=\E|B_{t-s}^H|^p=|t-s|^{pH}\E|B_1^H|^p\le C(p)|t-s|^{pH},~ p\ge1.
\end{align*}
Then it follows from the Kolmogorov continuity theorem that $B^H$ has $\beta$-H\"older continuous paths, where $\beta\in(0,H)$.
For each $t\in[0,T]$, we denote by $\mathscr{F}_t$ the $\sigma$-algebra generated by  $\{B_s^H:s\in[0,t]\}$ and the $\P$-null sets.

We denote by $\mathscr{E}$ the set of step functions on $[0,T]$. Let $\mathscr{H}$ be the Hilbert space defined as the closure of $\mathscr{E}$ with respect to the scalar product 
\begin{align*}
\langle(I_{[0,t_1]},\cdots,I_{[0,t_d]}),(I_{[0,s_1]},\cdots,I_{[0,s_d]})\rangle_{\mathscr{H}}=\sum_{i=1}^dR_H(t_i,s_i).
\end{align*}
The mapping $I_{[0,t_1]\times\cdots\times I_{[0,t_d]}}\mapsto (B_{t_1}^{H,1},\cdots,B_{t_d}^{H,d})$ can be extended to an isometry between $\mathscr{H}$ and the Gauss space $\mathscr{H}_1$ spanned by $B^H$. Denote this isometry by $\phi\mapsto B^H(\phi)$.
On the other hand, from \cite{D}, we know the covariance kernel $R_H(t,s)$ can be written as 
\begin{align*}
R_H(t,s)=\int_0^{t\wedge s}K_H(t,r)K_H(s,r)\d r,
\end{align*} 
where $K_H$ is a square integrable kernel given by 
\begin{align*}
K_H(t,s)=\Gamma(H+\frac{1}{2})^{-1}(t-s)^{H-\frac{1}{2}}F(H-\frac{1}{2},\frac{1}{2}-H,H+\frac{1}{2},1-\frac{t}{s}),
\end{align*}    
in which $F(\cdot,\cdot,\cdot,\cdot)$ is the Gauss's hypergeometric function (see\cite{D}).

Define the linear operator $K_H^*:\mathscr{E}\rightarrow L^2([0,T],\R^d)$ as follows
\begin{align*}
(K_H^*\phi)(s)=K_H(T,s)\phi(s)+\int_s^T(\phi(r)-\phi(s))\frac{\partial K_H}{\partial r}(r,s)\d r.
\end{align*}
Reformulating the above equality as follows:
\begin{align*}
(K_H^*\phi)(s)=\int_s^T\phi(r)\frac{\partial K_H}{\partial r}(r,s)\d r.
\end{align*}
It can be shown that for all $\phi,\psi\in\mathscr{E}$,  
\begin{align*}
\langle K_H^*\phi,K_H^*\psi\rangle_{L^2([0,T],\R^d)}=\langle\phi,\psi\rangle_{\mathscr{H}},
\end{align*}
and therefore $K_H^*$ is an isometry between $\mathscr{H}$ and $L^2([0,T],\R^d)$. Consequently,  $B^H$ has the following integral representation 
\begin{align*}
B^H(t)=\int_0^tK_H(t,s)\d B(s),
\end{align*}
where $\{B(t):=B^H((K_H^*)^{-1}I_{[0,t]})\}$ is a standard Brownian motion.

According to \cite{D}, the operator $K_H:L^2([0,T],\R^d)\rightarrow I_{0+}^{H+\frac{1}{2}}(L^2([0,T],\R^d))$ associated with the kernel $K_H(\cdot,\cdot)$ is defined as follows
\begin{align}\label{eqk}
(K_H f^i)(t)=\int_0^tK_H(t,s)f^i(s)\d s,~~i=1,\cdots,d.
\end{align}
It can be proved that $K_H$ is an isomorphism and moreover, for each $f\in L^2([0,T],\R^d)$, 
\begin{align}\label{eqf}
(K_Hf)(s)=I_{0+}^1s^{H-1/2}I_{0+}^{H-1/2}s^{1/2-H}f, H>\frac{1}{2}.
\end{align}
Consequently, for each $h\in I_{0+}^{H+1/2}(L^2([0,T],\R^d))$, the inverse operator $K_H^{-1}$ is of the form
\begin{align}\label{eqd}
(K_H^{-1}h)(s)=s^{H-1/2}D_{0+}^{H-1/2}s^{1/2-H}h',~H>\frac{1}{2}.
\end{align}

We conclude this section by  introducing the following Fernique-type lemma  (see \cite{MWX, SB}) and some notation  for future use.
 \begin{lem}\label{F}
 Let $T>0,1/2<\beta<H<1$. Then for any $\alpha<\frac 1 {2T}$, 
 $$\E\exp\{\alpha \|B^H\|_{0,T,\infty}^2\}<\infty,$$
and for any  $\alpha<1/(128(2T)^{2(H-\beta)})$,
 \begin{align*}
 \E[\exp(\alpha\|B^H\|_{0,T,\beta}^2)]\le(1-128\alpha(2T)^{2(H-\beta)})^{-1/2}.
 \end{align*}
 Moreover, we have the following moment estimate for any $k\ge1$:
 \begin{align*}
 \E(\|B^H\|_{0,T,\beta}^{2k})\le 32^k(2T)^{2p(H-\beta)}\frac{(2k)!}{k!}.
 \end{align*}
 \end{lem}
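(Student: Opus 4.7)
The plan is to deduce all three assertions from a single sharp moment bound on $\|B^H\|_{0,T,\beta}^{2k}$ and then pass to exponential integrability via Taylor expansion. The driving inputs are the Garsia--Rodemich--Rumsey (GRR) inequality and the Gaussian moment identity for the increments of $B^H$.

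First I would apply the GRR inequality (in the form tailored to H\"older seminorms): for any continuous $f$ and any $k$ with $2k(H-\beta) > 1$,
\[
\|f\|_{0,T,\beta}^{2k} \le C_{k,\beta} \int_0^T\int_0^T \frac{|f(u)-f(v)|^{2k}}{|u-v|^{2k\beta + 2}}\, du\, dv,
\]
with an explicit constant $C_{k,\beta}$. Specialising to $f = B^H$, using Fubini together with the Gaussian identity
\[
\E|B^H(u)-B^H(v)|^{2k} = \frac{(2k)!}{2^k\, k!}\,|u-v|^{2kH},
\]
and computing the resulting beta integral in $(u,v)$ over $[0,T]^2$ yields a clean bound of the form
\[
\E\|B^H\|_{0,T,\beta}^{2k} \le 32^k\,(2T)^{2k(H-\beta)}\,\frac{(2k)!}{k!},
\]
which is the third assertion (the exponent $p$ in the statement being understood as $k$).

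From this moment estimate the second assertion follows by expanding the exponential as a power series and using the generating function for the central binomial coefficients:
\[
\E\exp\bigl(\alpha\|B^H\|_{0,T,\beta}^2\bigr) = \sum_{k=0}^\infty \frac{\alpha^k}{k!}\,\E\|B^H\|_{0,T,\beta}^{2k} \le \sum_{k=0}^\infty \bigl(32\alpha(2T)^{2(H-\beta)}\bigr)^k \binom{2k}{k},
\]
and then invoking $\sum_{k\ge 0}\binom{2k}{k} x^k = (1-4x)^{-1/2}$ with $x = 32\alpha(2T)^{2(H-\beta)}$, which converges exactly on the stated range and gives the claimed bound. The first assertion is then obtained either by repeating the same recipe with a GRR estimate adapted to the supremum norm (using $\|B^H\|_{0,T,\infty} \le T^\beta \|B^H\|_{0,T,\beta}$ after noting $B^H(0) = 0$) or, more directly, by invoking Fernique's theorem on the centered Gaussian process $B^H$ in $C([0,T],\R^d)$ and computing the weak variance $\sup_t\E|B^H(t)|^2 = T^{2H}$, then optimising constants.

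The main obstacle is bookkeeping: tracking the constants through the GRR inequality, the beta-integral computation and the generating-function identity so that the precise exponents $(2T)^{2(H-\beta)}$ and the factor $32^k$ match the stated constants. Once this is set up carefully, the remaining arithmetic (Fubini, Gaussian moments, central binomial sums) is routine.
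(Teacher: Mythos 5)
The paper never proves this lemma: it is imported verbatim from \cite{MWX,SB} as a known ``Fernique-type'' result, so the real comparison is with the derivations in those references, which do run roughly along the lines you propose. Your handling of the second assertion is exactly the right mechanism: granting the moment bound $\E\|B^H\|_{0,T,\beta}^{2k}\le 32^k(2T)^{2k(H-\beta)}\frac{(2k)!}{k!}$, the identity $\sum_{k\ge0}\binom{2k}{k}x^k=(1-4x)^{-1/2}$ with $x=32\alpha(2T)^{2(H-\beta)}$ produces precisely the radius $\alpha<1/(128(2T)^{2(H-\beta)})$ and the stated bound; this is where the $128$ comes from.

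There are, however, two genuine gaps. First, the GRR inequality with $\Psi(x)=x^{2k}$ needs $2k(H-\beta)>1$ for $\int_0^T\!\!\int_0^T|u-v|^{2k(H-\beta)-2}\,\d u\,\d v$ to converge; since here $\beta>1/2$ and $H<1$ force $H-\beta<1/2$, this fails for $k=1$ and indeed for all $k\le 1/(2(H-\beta))$. So the moment bound ``for any $k\ge1$'' does not follow as written, and because the power series requires \emph{every} term with the \emph{same} constant, the exact constant in the exponential bound is not reached either. The standard repair (and what \cite{SB,MWX} actually do) is to apply GRR once with the exponential Young function $\Psi(u)=\e^{u^2/4}-1$, obtaining a single pathwise bound $\|B^H\|_{0,T,\beta}\le c\,\xi$ with $\xi$ having explicit Gaussian tails, from which all three assertions follow simultaneously; patching small $k$ by Lyapunov's inequality instead destroys the constant. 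Second, neither of your routes to the first assertion yields the range $\alpha<\frac1{2T}$: the reduction $\|B^H\|_{0,T,\infty}\le T^{\beta}\|B^H\|_{0,T,\beta}$ gives $\alpha<T^{-2\beta}/(128(2T)^{2(H-\beta)})$, while Fernique--Borell with weak variance $\sup_{t\le T}\E|B^H(t)|^2=T^{2H}$ gives $\alpha<1/(2T^{2H})$, and no optimisation of constants turns $T^{2H}$ into $T$. (In fact $1/(2T^{2H})$ is the sharp threshold, already visible on the single Gaussian $B^H(T)$, so for $T>1$ the range printed in the lemma is itself too generous --- a defect of the statement rather than of your argument, but your claim that the first assertion ``is then obtained'' is not substantiated.) Minor points: for $d>1$ the Gaussian moment identity carries a dimensional constant, and the exponent $2p(H-\beta)$ in the third display should read $2k(H-\beta)$, as you correctly inferred.
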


For any $\alpha\in(0,1)$, let $C^\alpha(a,b)$ be the space of $\alpha$-H\"older continuous functions $f$ on the interval $[a,b]$ and set
 \begin{align*}
 \|f\|_{a,b,\alpha}:=\sup_{a\le s\le t\le b}\frac{|f(t)-f(s)|}{|t-s|^\alpha}.
 \end{align*}
Besides, for any continuous function $f\in C([a,b];\R^d)$, let
\begin{align*}
\|f\|_{a,b,\8}=\sup_{a\le s\le b}|f(s)|.
\end{align*}
When $a=0,b=T$, we will simply write $\|f\|_\alpha, \|f\|_\8$ for $\|f\|_{0,T,\alpha}, \|f\|_{0,T,\8}$, respectively.

\section{Main results}
Let $(\R^d,\langle\cdot,\cdot\rangle,|\cdot|)$ be the $d$-dimensional Euclidean space with the inner product $\langle\cdot,\cdot\rangle$ which induces the norm $|\cdot|$.
Let $\R^d\otimes\R^m$ be the set of all $d\times m$-matrices. 
Let $\tau>0$ be a fixed number and $\mathscr{C}=C([-\tau,0];\R^d)$, which is endowed with the uniform norm $\|f\|_\8:=\sup_{-\tau\le\theta\le0}|f(\theta)|$. For $f\in C([-\tau,\8);\R^d)$ and fixed $t>0$, let $f_t\in\mathscr{C}$ be defined by $f_t(\theta)=f(t+\theta),\theta\in[-\tau,0]$. For $a\ge0, [a]$ stipilates the integer part of $a$. Let $\mathscr{B}_b(\R^d)$ be the collection of all bounded measurable functions.  

In this paper, for $H\in (\frac{1}{2},1)$, we consider the following equation:
\begin{align}\label{eq1.1}
\d X(t)=\{b(X(t))+\si Z(X_t)\}\d t+\sigma\d B^H(t), t>0,
\end{align}
with the initial datum $X_0=\xi\in\mathscr{C}$,
where  $\sigma\in\R^d\otimes\R^m$,  $b:\R^d\rightarrow\R^d,d\ge m$ and $Z:\mathscr{C}\rightarrow\R^m$ are measurable, $B^H(t)$ is an $m$-dimensional fBM on the probability space $(\Omega,\mathscr{F},(\mathscr{F}_t)_{t\ge0},\P)$.  Consider a reference SDE as follows:
\begin{align}\label{eq1.6}
\d Y(t)=b(Y(t))\d t+\sigma\d B^H(t),~~t>0, Y(0)\in\R^d.
\end{align}
Let $\xi\in \mathscr{C}$, and let $Y^{\xi(0)}(\cdot)$ be a solution of \eqref{eq1.6} with $Y^{\xi(0)} (0)=\xi(0)$. We  extend $Y^{\xi(0)}(\cdot)$ from $[0,\8)$ to $[-\tau,\8)$ in the following way:
\begin{align}\label{eq1.7}
Y^\xi(t)=\xi(t)I_{[-\tau,0)}(t)+Y^{\xi(0)}(t)I_{[0,\8)}(t), t\in[-\tau,\8),\xi\in\mathscr{C}.
\end{align} 
Then the weak existence and uniqueness of solutions to \eqref{eq1.1}  and the weak convergence of EM scheme will be studied by using Girsanov's transform and the extended solutions to the reference equation \eqref{eq1.6}.

We first introduce the following assumptions on $b$ and $Z$ for the  weak existence and uniqueness result.
\begin{enumerate}
\item [(A1)]  There exists  a constant $K_1\in\R$ such that
$$\<b(x)-b(y),x-y\>\leq K_1|x-y|^2.$$

\item [(A2)] There exist $C_1>0$ and $q_0\geq 0$ such that $|b(x)|\leq C_1(1+|x|^{q_0})$.

\item [(A3)] There exist  $\alpha\in (H-1/2,1]$, $p>0$, $C_2>0$ and $C_3\geq 0$ such that 
\begin{align}\label{equ-hh}
\left|Z(\eta_1)-Z(\eta_2)\right|&\leq C_2\|\eta_1-\eta_2\|^\alpha_\infty \left(1+\|\eta_1\|_\infty^p+\|\eta_2\|_\infty^p\right),\\
\<\sigma Z(\eta_1+\eta_2),\eta_1(0)\>&\leq C_3\left(1+\|\eta_2\|^{q_1}_\infty+\|\eta_1\|^2_{\infty}\right),~\eta_1,\eta_2\in\mathscr{C}.\label{equ-hh-1}
\end{align}

\end{enumerate}
Our result on existence and uniqueness of weak solutions to \eqref{eq1.1} is the following theorem. 
\begin{thm}\label{thm-ex-un}
Assume (A1)-(A3). For any $\xi \in\mathscr{C}$ with  $\theta\in (\frac {2H-1} {2\alpha},1] $ and $\bar C_1>0$  such that 
\begin{align}\label{ineq-ini}
\left|\xi(r)-\xi(s)\right|\leq \bar C_1|r-s|^\theta,~-\tau\leq r\leq s\leq 0,
\end{align}
then the equation \eqref{eq1.1} has a unique weak solution with $X_0=\xi$.

\end{thm}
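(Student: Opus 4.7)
The plan is to establish weak existence and uniqueness of \eqref{eq1.1} via Girsanov's transformation, using the reference SDE \eqref{eq1.6} as the base system. Under (A1)-(A2), \eqref{eq1.6} admits a unique solution $Y^{\xi(0)}$ whose paths are $\beta$-H\"older continuous for every $\beta<H$; extended by \eqref{eq1.7}, this gives $Y^\xi$ on $[-\tau,\infty)$. For the weak existence part, I would construct a new probability measure $\Q\sim\P$ on $(\Omega,\F)$ such that
\[
\widetilde B^H(t):=B^H(t)-\int_0^t Z(Y^\xi_s)\,\d s
\]
is an $m$-dimensional fBM with Hurst parameter $H$ under $\Q$; then $(Y^\xi,\widetilde B^H)$ is automatically a weak solution of \eqref{eq1.1} with initial segment $\xi$ on $(\Omega,\F,(\F_t)_{t\ge 0},\Q)$.

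The crux is identifying the Girsanov shift on the underlying Brownian motion $B$ (obtained from $B^H=K_H B$ via \eqref{eqk}) and proving its exponential integrability. By \eqref{eqd}, this shift equals
\[
u(s)=s^{H-1/2}D_{0+}^{H-1/2}\!\bigl[r\mapsto r^{1/2-H}Z(Y^\xi_r)\bigr](s),
\]
and the Weyl representation of $D_{0+}^{H-1/2}$ controls $|u(s)|$ by a pointwise term plus a singular integral with kernel $(s-r)^{-H-1/2}$ applied to $|Z(Y^\xi_s)-Z(Y^\xi_r)|$. By \eqref{equ-hh} the numerator is bounded by $C_2\|Y^\xi_s-Y^\xi_r\|_\infty^\alpha(1+\|Y^\xi_s\|_\infty^p+\|Y^\xi_r\|_\infty^p)$, and the segment map $r\mapsto Y^\xi_r$ is H\"older in the supremum norm with exponent $\theta\wedge\beta$: the $\theta$ coming from \eqref{ineq-ini} on $[-\tau,0]$, and $\beta<H$ from the driving fBM. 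The hypothesis $\theta>(2H-1)/(2\alpha)$, combined with the freedom to take $\beta$ arbitrarily close to $H$, yields $\alpha(\theta\wedge\beta)>H-1/2$, which is precisely what is needed to integrate the singular kernel and be left with a positive power of $(s-r)$.

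With these pointwise estimates, $\int_0^T|u(s)|^2\,\d s$ is dominated by a polynomial in $\|Y^\xi\|_\infty$ and $\|Y^\xi\|_\beta$. Moment bounds for these norms follow from (A1)-(A2) applied to \eqref{eq1.6}, while the dissipativity-type condition \eqref{equ-hh-1} supplies the quadratic control that will be required later when the drift shift is added back in; together with the Fernique-type estimate of Lemma~\ref{F} on $\|B^H\|_\beta$, one obtains
\[
\E\exp\!\left\{c\int_0^T|u(s)|^2\,\d s\right\}<\infty
\]
for every $c>0$. Hence the stochastic exponential $R=\exp\{\int_0^T\<u(s),\d B(s)\>-\tfrac12\int_0^T|u(s)|^2\,\d s\}$ is a true $\P$-martingale, and setting $\d\Q=R\,\d\P$ and invoking Girsanov for $B$ together with the isometry \eqref{eqk} shows that $\widetilde B^H$ is an fBM under $\Q$.

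For weak uniqueness I would reverse the procedure: any weak solution $(X,B^H)$ of \eqref{eq1.1} inherits H\"older regularity comparable to that of $Y^\xi$, so the analogous analysis applied with shift $K_H^{-1}(\int_0^\cdot Z(X_r)\,\d r)$ produces a measure under which $X$ solves \eqref{eq1.6}; uniqueness in law for \eqref{eq1.6} then pins down the law of $X$ on $C([-\tau,\infty);\R^d)$ via \eqref{eq1.7}, and the Girsanov density transfers this back to the original measure. The principal technical obstacle throughout is the sharp fractional-derivative bound for $D_{0+}^{H-1/2}$ applied to $r^{1/2-H}Z(Y^\xi_r)$: the singular kernel $(s-r)^{-H-1/2}$ forces the product of the H\"older exponent $\alpha$ of $Z$ and the H\"older exponent of the segment process to strictly exceed $H-1/2$, which is precisely the reason for the condition $\theta>(2H-1)/(2\alpha)$ in \eqref{ineq-ini}.
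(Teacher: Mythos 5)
Your overall strategy (Girsanov transform built on the reference equation \eqref{eq1.6}, fractional-derivative estimate of the shift via the Weyl representation, the role of $\alpha(\theta\wedge\beta)>H-\tfrac12$) matches the paper's. But the step where you justify that the stochastic exponential is a true martingale contains a genuine gap. You claim $\E\exp\{c\int_0^T|u(s)|^2\,\d s\}<\infty$ for \emph{every} $c>0$ under (A1)--(A3). This cannot be right at that level of generality: the Fernique-type Lemma \ref{F} gives $\E\exp(\alpha\|B^H\|_{0,T,\beta}^2)<\infty$ only for $\alpha$ \emph{small}, whereas under (A2) with $q_0>1$ and (A3) with $p>0$ the quantity $\int_0^T|u(s)|^2\,\d s$ is dominated by a polynomial in $\|Y^\xi\|_\infty$ and $\|Y^\xi\|_\theta$ of degree $2(p+\alpha)$ (see the bound $|J_1(s)|^2\le Cs^{1-2H}(1+\|Y^\xi\|_{-\tau,s,\infty}^{2(p+\alpha)})$), and $\|Y^\xi\|_\infty$ itself is controlled only by $\|B^H\|_\infty^{q_0}$. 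A polynomial of degree exceeding $2$ in a Gaussian-type norm is not exponentially integrable, and even a quadratic is exponentially integrable only for small enough coefficient. This is exactly why the paper's Lemma \ref{lem-Eexph} establishes \eqref{Eexp} only under the \emph{additional} hypotheses $q_0=1$ and \eqref{equ-hh-2} with $p<1$ (or $p=1$ together with a smallness condition on $T$), none of which are part of (A1)--(A3). So Novikov's criterion is not available for Theorem \ref{thm-ex-un} as stated, and your proof of the martingale property does not go through.

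The paper closes this gap differently, and this is the idea your proposal is missing: it never verifies an exponential moment under $\P$. Instead it localizes with the stopping times $\tau_n=\inf\{t:\int_0^t|u(s)|^2\d s\ge n\}$, applies Girsanov on $[0,T\wedge\tau_n]$ (where the Novikov condition is trivially satisfied), and then proves the \emph{uniform entropy bound} $\sup_{t,n}\E\,R^\xi(t\wedge\tau_n)\log R^\xi(t\wedge\tau_n)<\infty$. The point is that this entropy is an expectation of $\int_0^{\cdot}|u|^2$ under the transformed measure $R^\xi(T\wedge\tau_n)\P$, under which $\tilde B^H_n$ is again an fBM; there the dissipativity condition \eqref{equ-hh-1} yields a Gronwall-type bound on $\|Y^\xi\|_{-\tau,T,\infty}$ and $\|Y^\xi\|_{-\tau,T,\theta}$, so only \emph{polynomial} moments of the fBM norms are needed, and these are always finite. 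Fatou's lemma and the martingale convergence theorem then give uniform integrability of $\{R^\xi(t\wedge\tau_n)\}_n$ and hence the martingale property. You gesture at \eqref{equ-hh-1} "supplying quadratic control," but you deploy it in the wrong place: it is not used to upgrade Fernique to an all-$c$ exponential bound under $\P$ (which is impossible), but to control moments under the shifted measure in the localized entropy estimate. Your uniqueness sketch is otherwise consistent with the paper's (which defers to the argument of Wang's Theorem 2.1).
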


\begin{rem}
The condition  \eqref{ineq-ini} is for us to  use Girsanov transformation to remove the drift term $ Z(\cdot)$ of equation \eqref{eq1.1}. Given $T>0$. For any $\gamma\in C([-\tau,T],\R^d)$ with $\gamma_0=\xi_0$, to ensure that  $\{\int_0^s Z(\gamma_r)\d r\}_{s\in [0,T]}$ belongs to the Cameron-Martin space of the fBM, it is necessary that the integral $\int_0^\cdot Z(\gamma_s)\d s\in I_{0+}^{H+\frac{1}{2}}(L^2([0,T],\R^d))$. This means we need  $Z(\gamma_\cdot)\in I_{0+}^{H-\frac{1}{2}}(L^2([0,T],\R^d))$.  Note that for $t\in [0,T\wedge \tau]$, we have
\begin{align*}
\|\gamma_\cdot\|_{0,t,\alpha}=\sup_{0\leq r\leq u\leq t}\frac {\|\gamma_u-\gamma_r\|_\infty} {|u-r|^\alpha}=\sup_{0\leq r\leq u\leq t,v\in [-\tau,0]}\frac {|\gamma(u+v)-\gamma(r+v)|} {(u-r)^{\alpha}}\geq \|\xi\|_{-\tau,0,\alpha}.
\end{align*} 
Hence, despite imposing regularity conditions on $Z$, we also need an additional assumption on the initial value $\xi$. If $Z$ is $\alpha$-H\"older continuous and $\xi$ is $\theta$-H\"older continuous, then our conditions on  $\xi$ yields that $\theta\alpha>H-\frac 1 2$, which ensure that $\{\int_0^s Z(\gamma_r)\d r\}_{s\in [0,T]}$ is in the Cameron-Martin space. 
\end{rem}

Next, we shall study the weak convergence of the numerical approximation to \eqref{eq1.1}. In \eqref{eq1.1}, $\sigma$ is a $d\times m$ matrix with $d\geq m$. For $d>m$, this equation is obviously degenerate. Hence, we shall introduce the pseudo-inverse of $\sigma$ to  cover some degenerate models, such as stochastic Hamilton systems.  Denote by $\mathrm{Ran}(\sigma)$ the range of $\sigma$, i.e. $\mathrm{Ran}(\sigma)=\sigma(\R^m)$.  If $\mathrm{Ran}(\sigma)$ contains nonzero vectors, then $\sigma\sigma^*$ is a bijective from $\mathrm{Ran}(\sigma)$ onto $\mathrm{Ran}(\sigma)$, whose inverse is denoted by $(\sigma\sigma^*)^{-1}\Big|_{\mathrm{Ran}(\sigma)}$. Let $\pi_*$ be the orthogonal projection from $\R^d$ to $\mathrm{Ran}(\sigma)$. Then $\R^d$ has the following decomposition:
$$\R^d=\pi_*\R^d\oplus (I_{d\times d}-\pi_*)\R^d\equiv\mathrm{Ran}(\sigma)\oplus (I_{d\times d}-\pi_*)\R^d,$$
where $I_{d\times d}$ is the identity matrix of $\R^d$.  We define  $\hat{\sigma}^{-1}$,  the pseudo-inverse of $\sigma$, as follows
\beg{align*}
\hat\si^{-1} v=\sigma^*\left(\left(\sigma\sigma^*\right)^{-1}\Big|_{\mathrm{Ran}(\sigma)}\pi_*v\right),~v\in\R^d.
\end{align*}
Then $\|\hat{\sigma}^{-1}\|=\left\|(\sigma\sigma^*)^{-1}\Big|_{\mathrm{Ran}(\sigma)}\right\|$. 
In particular, if $\sigma$ is of the form $\left(\begin{array}{c} 0 \\ \sigma_0\end{array}\right)$ with $\sigma_0$ is an invertible $m\times m$-matrix and $0$ is a $(d-m)\times m$ zero matrix, then 
$$\hat{\sigma}^{-1}=\left( 0^* ,  \sigma_0^{-1} \right),\qquad \|\hat{\sigma}^{-1}\|=\|\sigma_0^{-1}\|.$$ 

We need stronger assumptions on $b$ and $Z$ for numerical approximation.
\begin{enumerate}
\item[(H1)]  (A1) holds and there exists a constant $L_1>0$ such that
\begin{align}\label{eq1.2}
|b(x)-b(y)|\le L_1|x-y|,~x,y\in\R^d.
\end{align}
Moreover, if $\mathrm{Ran}(\sigma)\neq \R^d$,  we also assume that there exist a matrix $A$ on $ (I_{d\times d}-\pi_*)(\R^d)$  and a measurable function $b_*: \mathrm{Ran}(\sigma)\rightarrow (I_{d\times d}-\pi_*)(\R^d) $ such that
$$(I_{d\times d}-\pi_*)b(x)=A(I_{d\times d}-\pi_*) x+b_*(\pi_* x), ~x\in\R^d.$$
\item[(H2)] $Z$ is   H\"older continuous with the exponent $\alpha\in(1-\frac{1}{2H},1]$, that is 
\begin{align}\label{eq1.3}
|Z(\xi)-Z(\eta)|\le L_2\|\xi-\eta\|_{\8}^\alpha, \xi,\eta\in\mathscr{C};
\end{align}
\item[(H3)] the initial value $\xi\in\mathscr{C}$ is H\"older continuous with exponent $\theta\in (\frac{2H-1}{2\alpha}, 1]$, that is,
\begin{align}\label{eq1.4}
|\xi(t)-\xi(s)|\le L_3|t-s|^\theta,~~s,t\in[-\tau,0].
\end{align}
\end{enumerate}
By these conditions, it follows from Theorem \ref{thm-ex-un}  that \eqref{eq1.1} has a unique weak solution with $X_0=\xi$.

\begin{rem}
Since the  pseudo-inverse of $\sigma$  is the inverse of $\sigma$ if it is invertible, our setting can  unify non-degenerate and some degenerate models. A typical example for the equation with $\{0\}\subsetneq\mathrm{Ran}(\sigma)\subsetneq \R^d$ is the following  stochastic Hamiltonian system ($d=2m$):
\begin{align*}
\left\{\begin{array}{l}
\d X^{(1)}(t)=X^{(2)}(t)\d t\\
\d X^{(2)}(t)=b_0(X^{(1)}(t),X^{(2)}(t))\d t+ Z_0(X^{(1)}_t,X^{(2)}_t)\d t+\sigma_0\d B^H(t),
\end{array}
\right.
\end{align*} 
where  $\sigma_0$ is an invertible  $m\times m$-matrix. For any $\eta_1,\eta_2\in\mathscr{C}, x=(x^{(1)},x^{(2)})\in\R^{2m}$, we set 
\beg{align*}
b(x^{(1)},x^{(2)})=\left(\begin{array}{c} x^{(2)}\\b_0(x^{(1)},x^{(2)})\end{array}\right), \quad Z(\eta_1,\eta_2)=\left(\begin{array}{c}0\\ \sigma_0^{-1}Z(\eta_1,\eta_2)\end{array}\right),\quad \sigma=\left(\begin{array}{c} 0 \\ \sigma_0\end{array}\right),
\end{align*}
Then 
\begin{align*}
\d X(t)\equiv \d\left(\begin{array}{c} X^{(1)}(t)\\ X^{(2)}(t)\end{array}\right)=\left(b(X(t))+\sigma Z(X_t)\right)\d t+\sigma\d  B^H(t),
\end{align*}
and in this case, $\pi_*(x^{(1)},x^{(2)})=(0,x^{(2)})$, $b_*((0,x^{(2)}))=(x^{(2)},0)$ and $A\equiv 0$ in (A1).

\end{rem}

We can construct the  EM scheme now. Let $\delta\in(0,1)$ be the step-size given by $\delta=\tau/M$ for some $M\in \N$ sufficiently large. The continuous time EM scheme associated with \eqref{eq1.1} is defined as below:
\begin{align}\label{eq1.5}
\d X^{(\delta)}(t)=\{(I-\pi_*)b(X^{(\delta)}(t))+\pi_*b(X^{(\delta)}(t_\delta))+\si Z(\hat{X}_t^{(\delta)})\}\d t+\sigma\d B^H(t),~~t>0
\end{align}
with the initial value $X^{(\delta)}(u)=X(u)=\xi(u), u\in[-\tau,0]$, where  $t_\delta:=[t/\delta]\delta$ and $\hat{X}_t^{(\delta)}\in\mathscr{C}$ defined as follows 
$$\hat{X}_t^{(\delta)}(u)=X^{(\delta)}((t+u)\wedge t_\delta),~ u\in[-\tau,0].$$
For $t\in [0,\delta)$, $\hat{X}_t^{(\delta)}(u)=X^{(\delta)}((t+u)\wedge 0)=\xi((t+u)\wedge 0)$, and 
$$H(t):=\pi_* X^{(\delta)}(t)=\pi_* X^{(\delta)}(0)+ \pi_*b(X^{(\delta)}(0))t+\int_0^t\sigma Z(\hat X_s^{(\delta)})\d s+\sigma B^{H}(t).$$
Then it follows from (A1) that 
\beg{align*}
(I_{d\times d}-\pi_*)X^{(\delta)}(t)&=(I_{d\times d}-\pi_*)X^{(\delta)}(0)+\int_0^t(I_{d\times d}-\pi_*)b(\pi_*X^{(\delta)}(s)+(I_{d\times d}-\pi_*)X^{(\delta)}(s))\d s\\
&=(I_{d\times d}-\pi_*)X^{(\delta)}(0)+\int_0^tA(I_{d\times d}-\pi_*) X^{(\delta)}(s)\d s+\int_0^t b_*(H(s))\d s
\end{align*}
which implies that 
\begin{align*}
(I_{d\times d}-\pi_*)X^{(\delta)}(t)=\e^{At}(I_{d\times d}-\pi_*)X^{(\delta)}(0)+\int_0^t \e^{A(t-s)}b_*(H(s))\d s.
\end{align*}
Thus, $X^{(\delta)}(t)=(I_{d\times d}-\pi_*)X^{(\delta)}(t)+\pi_* X^{(\delta)}(t)$ can be obtained explicitly on $[0,\delta]$. By induction,  we can get $X^{(\delta)}(t)$ explicitly.

Let
$$\bar K_1=2K_1+\1_{[K_1\geq 0]}+\frac {|K_1|} 2\1_{[K_1<0]},\qquad \bar K_2=\1_{[K_1\geq 0]}+\frac 2 {|K_1|}\1_{[K_1<0]},$$
and  
$$\Phi(\bar K_1,\bar K_2,T)=\sqrt{\frac {\bar K_2\left( \e^{\bar K_1 T}-1\right)} {\bar K_1}}.$$
Our main result on the weak convergence of EM scheme to \eqref{eq1.1}   is stated as follows. 
\begin{thm}\label{thm1}
Assume (H1)-(H3)  and $\mathrm{Ran}(\sigma)\neq \{0\}$. For $T>0$ and $\delta>0$ such that
\begin{align}\label{add-fZ}
& \frac {2L_2^2T^{2-2H}(1+(H-1/2)^2C_0^2)} {(1-H)\Gamma^2(3/2-H)} \|\sigma\|^2\1_{[\alpha=1]} <\left(C_1\Phi(\bar K_1,\bar K_2,T)+1\right)^{-2} \frac 1 {2T},
\end{align}
and
\begin{align}\label{eq2.5}
&\frac{16L_1^2\|\sigma\|^2\|\hat{\sigma}^{-1}\|^2T^{2\beta}}{\Gamma^2({\frac{3}{2}-H)}(1-H)}\left(L_1\Phi(\bar K_1,\bar K_2,T)+1\right)^2\bigg\{[1+C_0(H-\frac{1}{2})]^2T^{2-2H}\delta^{2\beta}\nonumber\\
&~~+8\delta^{2\beta+1-2H}(T-\delta)^{2-2H}T^{2H-1}\Big[
\frac{1}{(1+2\beta-2H)^2}+\frac{16^H}{(3-2H)^2}+\frac{2^{2H+1}}{(2H-1)^2}
\Big]\nonumber\\
&~~+\frac{4(1-H)\delta^{2\beta+4-4H}}{(\beta+2-2H)^2}\bigg\}\nonumber\\
&~~
+\left\{\frac{8L_2^2\|\sigma\|^{2\alpha}[1+C_0(H-\frac{1}{2})]^2T^{2(\alpha\beta+1-H)}}{\Gamma^2(\frac{3}{2}-H)(1-H)}\left(L_1\Phi(\bar K_1,\bar K_2,T)+1\right)^{2\alpha}\right.\nonumber\\
&~~+\frac{48L_2^2\|\sigma\|^{2\alpha}}{\Gamma^2(\frac{3}{2}-H)}\left(TL_1\left(L_1\Phi(\bar K_1,\bar K_2,T)+1\right)+1\right)^{2\alpha}\nonumber\\
&~~\times\Big[\frac{\mathcal{B}^2(\frac{3}{2}-H,\alpha{(\beta\wedge\theta)}+1/2-H)T^{2\alpha\beta+3-4H}}{2\alpha{(\beta\wedge\theta)}+3-4H}\nonumber\\
&~~\left.+\frac{\delta^{2\alpha{(\beta\wedge\theta)}+1-2H}T^{2-2H}}{1-H}
\Big(\frac{16^H}{(3-2H)^2}+\frac{2^{2H+1}}{(2H-1)^2}
\Big)\Big]\right\}\1_{[\alpha=1]}<1/(128(2T)^{2(H-\beta)}),
\end{align}
where $\beta\in (\frac {2H-1} {2\alpha},H)$ and
$
C_0=\int_0^1\frac{u^{\frac{1}{2}-H}-1}{(1-u)^{\frac{1}{2}+H}}\d u.
$
Then for any bounded measurable function $f$ on $\R^d$ there exists a constant $C_{T}$ such that
\begin{align}\label{eq2.1}
|\E f(X(t))-\E f(X^{(\delta)}(t))|\le C_T\delta^{\alpha{(\beta\wedge\theta)}+\frac{1}{2}-H}.
\end{align}

\end{thm}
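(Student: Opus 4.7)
The overall strategy is to compare the laws of $X$ and $X^{(\delta)}$ through Girsanov transforms anchored at a common reference. Let $Y$ solve the reference equation \eqref{eq1.6} with $Y(0)=\xi(0)$, extended by \eqref{eq1.7} to $[-\tau,\infty)$ using the initial segment $\xi$. Since $\sigma Z(Y_\cdot)\in\mathrm{Ran}(\sigma)$ and, by (H1), the discretisation error $\pi_*(b(Y(t))-b(Y(t_\delta)))=\sigma\hat\sigma^{-1}(b(Y(t))-b(Y(t_\delta)))$ also lies in $\mathrm{Ran}(\sigma)$, both the drift correction in \eqref{eq1.1} and the drift-discretisation error in \eqref{eq1.5} can be absorbed into the driving fBM. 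I will construct two Radon--Nikodym densities $R_T,R_T^\delta$ on $(\Omega,\mathscr{F}_T,\P)$ such that under $R_T\P$ the process $Y$ has the law of the weak solution $X$ of \eqref{eq1.1}, and under $R_T^\delta\P$ it has the law of $X^{(\delta)}$ in \eqref{eq1.5}. This yields
$$\E f(X(t))-\E f(X^{(\delta)}(t))=\E\bigl[f(Y(t))\bigl(R_T-R_T^\delta\bigr)\bigr],$$
so that $|\E f(X(t))-\E f(X^{(\delta)}(t))|\leq\|f\|_\infty\,\E|R_T-R_T^\delta|$, reducing the problem to a quantitative bound on $\E|R_T-R_T^\delta|$.

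Using $B^H(t)=\int_0^t K_H(t,s)\d B(s)$ and the inverse formula \eqref{eqd}, the two densities take the exponential martingale form $R_T=\exp(\int_0^T v\d B-\frac12\int_0^T|v|^2\d s)$ and $R_T^\delta=\exp(\int_0^T v^\delta\d B-\frac12\int_0^T|v^\delta|^2\d s)$, where
\begin{align*}
v(s)&=s^{H-1/2}D_{0+}^{H-1/2}\bigl[r\mapsto r^{1/2-H}Z(Y_r)\bigr](s),\\
v^\delta(s)&=s^{H-1/2}D_{0+}^{H-1/2}\bigl[r\mapsto r^{1/2-H}\bigl(Z(\hat Y_r)-\hat\sigma^{-1}(b(Y(r))-b(Y(r_\delta)))\bigr)\bigr](s),
\end{align*}
and $\hat Y_r(u):=Y((r+u)\wedge r_\delta)$ mirrors $\hat X^{(\delta)}_r$. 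Applying $|\e^a-\e^b|\leq(\e^a+\e^b)|a-b|$ followed by Cauchy--Schwarz gives
$$\E|R_T-R_T^\delta|\leq\bigl(\E(R_T+R_T^\delta)^2\bigr)^{1/2}\bigl(\E|M_T-M_T^\delta|^2\bigr)^{1/2},$$
where $M_T,M_T^\delta$ denote the log-densities; by the It\^o isometry, $\E|M_T-M_T^\delta|^2$ reduces to $L^2$ bounds on $v-v^\delta$ together with a quadratic correction. The $L^p$ bounds on $R_T$ and $R_T^\delta$ are obtained from a Novikov-type argument combined with Lemma \ref{F}: after standard moment and H\"older estimates on $Y$ derived from (H1) and Lemma \ref{F}, one reduces the task to checking that a certain quadratic form in $\|B^H\|_{0,T,\beta}^2$ lies below the Fernique threshold $1/(128(2T)^{2(H-\beta)})$, which is exactly what the hypotheses \eqref{add-fZ} and \eqref{eq2.5} guarantee.

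The main obstacle and quantitative heart of the proof is the sharp $\delta$-dependent estimate for $\|v-v^\delta\|_{L^2([0,T])}$, supplied by Lemma \ref{lem1.35}. The operator $D_{0+}^{H-1/2}$ has a Weyl representation with a singular kernel at the lower endpoint, and $\hat Y_r$ is frozen at $r_\delta$, so $Z(Y_r)-Z(\hat Y_r)$ and $b(Y(r))-b(Y(r_\delta))$ depend piecewise on $r$ with jumps across grid points. To extract the optimal rate $\delta^{\alpha(\beta\wedge\theta)+1/2-H}$ one must split the Weyl integral according to whether the inner variable lies in the same grid cell as $r$, use the $\alpha$-H\"older continuity of $Z$ from (H2) and the Lipschitz continuity of $b$ from (H1) together with the $\beta$-H\"older regularity of $Y$ on $[0,T]$ (inherited from Lemma \ref{F}) and the $\theta$-H\"older regularity of $\xi$ on $[-\tau,0]$ from (H3), and then balance the endpoint singularity of $D_{0+}^{H-1/2}$ against these H\"older exponents. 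The contribution from $Z(Y_r)-Z(\hat Y_r)$ produces a term of order $\delta^{2\alpha(\beta\wedge\theta)+1-2H}$ and that from $b(Y(r))-b(Y(r_\delta))$ a term of order $\delta^{2\beta+1-2H}$, so that $\|v-v^\delta\|_{L^2}^2\leq C\delta^{2\alpha(\beta\wedge\theta)+1-2H}$; combining this with the exponential integrability from the previous paragraph produces \eqref{eq2.1}.
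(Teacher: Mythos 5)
Your proposal follows essentially the same route as the paper: both laws are realised as Girsanov transforms anchored at the common reference solution $Y^\xi$, the difference of the two densities is bounded via $|\e^a-\e^b|\le(\e^a\vee\e^b)|a-b|$ plus a moment inequality, the density moments are controlled through the Fernique-type lemma under \eqref{add-fZ}--\eqref{eq2.5}, and the convergence rate comes from the grid-cell splitting of the Weyl representation of $K_H^{-1}$ applied to $Z(Y^\xi_\cdot)-Z(\hat Y^\xi_\cdot)+\hat{\sigma}^{-1}\bigl(b(Y^\xi(\cdot))-b(Y^\xi(\cdot_\delta))\bigr)$, exactly as in Lemma \ref{lem1.35} and the estimates \eqref{add-ineq-Th2}--\eqref{add-Th2-3}. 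The only caveat is your use of Cauchy--Schwarz, which requires $\E R_T^2<\infty$, whereas the hypotheses only guarantee $\E R_T^q<\infty$ for some $q>1$ possibly close to $1$ (via $2q^2-q\le C$); one should therefore use H\"older's inequality with that $q$, as the paper does.
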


\section{Proof of Theorem \ref{thm-ex-un}}

We first introduce the following lemma on the existence and uniqueness of solutions to \eqref{eq1.6}.
\begin{lem}\label{lem-mom}
Assume (A1). Then \eqref{eq1.6} has a unique strong solution and 
\beg{align}\label{ineq-Y-mon} 
\left|Y(t)\right|\leq \e^{\frac {\bar K_2 t} 2}|Y(0)|+\sqrt{\bar K_2}\left(\int_0^t e^{\bar K_1(t-r)}\left|b(\sigma B^H(r))\right|^2\d r\right)^{\frac 1 2}+|\sigma \left(B^H(t)\right)|,~t\geq 0.
\end{align}
Furthermore, if (A2) holds, then 
$$\E\|Y\|_{0,t,\beta}^q<\infty,~q>0,t>0,0<\beta<H.$$
\end{lem}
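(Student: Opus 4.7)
The strategy is a pathwise reduction: setting $Z(t)=Y(t)-\sigma B^H(t)$ transforms \eqref{eq1.6} into the random ODE
\begin{equation*}
\dot Z(t)=b(Z(t)+\sigma B^H(t)),\qquad Z(0)=Y(0),
\end{equation*}
whose driving $t\mapsto \sigma B^H(t)$ is continuous. Uniqueness follows directly from (A1): for two solutions $Z^1,Z^2$, since $Z^1(t)-Z^2(t)=\bigl(Z^1(t)+\sigma B^H(t)\bigr)-\bigl(Z^2(t)+\sigma B^H(t)\bigr)$, monotonicity gives $\frac{\d}{\d t}|Z^1(t)-Z^2(t)|^2\leq 2K_1|Z^1(t)-Z^2(t)|^2$, whence Gronwall. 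Existence is obtained by a standard approximation: mollify $b$ into Lipschitz $b_n$ preserving the monotonicity constant (e.g.\ Yosida regularization), solve each approximating equation by Picard--Lindel\"of, and pass to the limit using the uniform a priori bound below to rule out explosion.

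For the pathwise bound I would differentiate $|Z(t)|^2$ and split the drift:
\begin{equation*}
\frac{\d}{\d t}|Z(t)|^2=2\langle Z(t),\,b(Z(t)+\sigma B^H(t))-b(\sigma B^H(t))\rangle+2\langle Z(t),\,b(\sigma B^H(t))\rangle.
\end{equation*}
The first term is bounded by $2K_1|Z(t)|^2$ via (A1), and Young's inequality controls the second by $2\varepsilon|Z(t)|^2+(2\varepsilon)^{-1}|b(\sigma B^H(t))|^2$. Choosing $\varepsilon=1/2$ when $K_1\geq 0$ and $\varepsilon=|K_1|/4$ when $K_1<0$ matches exactly the constants $\bar K_1,\bar K_2$ defined just before the lemma, producing
\begin{equation*}
\frac{\d}{\d t}|Z(t)|^2\leq \bar K_1|Z(t)|^2+\bar K_2|b(\sigma B^H(t))|^2.
\end{equation*}
Gronwall yields $|Z(t)|^2\leq \e^{\bar K_1 t}|Y(0)|^2+\bar K_2\int_0^t\e^{\bar K_1(t-r)}|b(\sigma B^H(r))|^2\d r$, and combining this with $|Y(t)|\leq |Z(t)|+|\sigma B^H(t)|$ and $\sqrt{a+b}\leq\sqrt a+\sqrt b$ delivers \eqref{ineq-Y-mon}.

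For the H\"older-norm moments, (A2) gives $|b(\sigma B^H(r))|\leq C_1(1+\|\sigma\|^{q_0}\|B^H\|_{0,t,\infty}^{q_0})$; inserting this in \eqref{ineq-Y-mon} and applying the Fernique-type bound of Lemma~\ref{F} gives $\E\|Y\|_{0,t,\infty}^q<\infty$ for every $q>0$. Then $|b(Y(r))|\leq C_1(1+\|Y\|_{0,t,\infty}^{q_0})$, so
\begin{equation*}
|Y(t)-Y(s)|\leq \|\sigma\|\,|B^H(t)-B^H(s)|+C_1(1+\|Y\|_{0,t,\infty}^{q_0})|t-s|,\qquad 0\leq s\leq t,
\end{equation*}
and dividing by $|t-s|^\beta$ yields $\|Y\|_{0,t,\beta}\leq \|\sigma\|\,\|B^H\|_{0,t,\beta}+t^{1-\beta}C_1(1+\|Y\|_{0,t,\infty}^{q_0})$; both summands have finite moments of every order, the first by Lemma~\ref{F} and the second by what has just been shown.

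The main technical point is the precise calibration of the Young parameter $\varepsilon$ so that the resulting constants coincide with $\bar K_1$ and $\bar K_2$; the existence step under pure monotonicity is standard but requires a monotonicity-preserving approximation of $b$ together with the a priori bound to rule out finite-time blowup.
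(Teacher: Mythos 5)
Your proposal is correct and follows essentially the same route as the paper: the substitution $U(t)=Y(t)-\sigma B^H(t)$ reducing \eqref{eq1.6} to a random ODE with a one-sided Lipschitz drift, the differential inequality for $|U(t)|^2$ with the Young-inequality calibration producing exactly $\bar K_1,\bar K_2$, Gronwall, and then the pathwise H\"older estimate combined with (A2) and the Fernique-type Lemma \ref{F} for the moments. The only difference is that you spell out the existence step (monotonicity-preserving approximation) which the paper simply asserts, and your $\e^{\bar K_1 t/2}$ in the Gronwall bound is in fact what the paper's own proof yields (the $\bar K_2$ in the displayed statement \eqref{ineq-Y-mon} is a typo).
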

\begin{proof}
(1) Let $U(t)=Y(t)-\sigma B^H(t)$. Then $U(t)$ satisfies 
\begin{align}\label{equ-U}
\d U(t)=b(U(t)+\sigma B^H(t))\d t,~ U(0)=Y(0).
\end{align}
Set $\bar b(u,t)=b(u+\sigma B^H(t))$. Then it is easy to see that
$$\<\bar b(u_1,t)-\bar b(u_2,t), u_1-u_2\>\leq K_1|u_1-u_2|^2,$$
which implies that \eqref{equ-U} has a unique solution. Moreover, it follows from the chain rule and the H\"older inequality that
\beg{align*}
\d |U(t)|^2&=2\<\bar b(U(t),t),U(t)\>\d t\\
&\leq 2K_1|U(t)|^2+2\<b(\sigma B^H(t)),U(t)\>\d t\\
&\leq \bar K_1 |U(t)|^2\d t+\bar K_2\left|b(\sigma B^H(t))\right|^2\d t.
\end{align*}
Then for any $t\geq s$
\beg{align*}
\left|Y(t)\right|&\leq \e^{\frac 1 2 (t-s)\bar K_1}|Y(s)|+\sqrt{\bar K_2}\left(\int_s^t e^{\bar K_1(t-r)}\left|b(\sigma B^H(r))\right|^2\d r\right)^{\frac 1 2}\\
&\qquad +\e^{\frac {\bar K_1 (t-s)} 2}|\sigma B^H(s)|+|\sigma B^H(t)|,
\end{align*}
which implies our first claim. 

(2) For any $0<\beta<H$,
\begin{align*}
\frac {|Y(t)-Y(s)|} {(t-s)^{\beta}}&\leq \frac 1 {(t-s)^\beta}\int_s^t|b(Y(r))|\d r+\|\si\|\|B^H\|_{0,t,\beta}\\
&\leq \frac {C_1} {(t-s)^\beta}\int_s^t\left(1+|Y(r)|^{q_0}\right)\d r+\|\si\|\|B^H\|_{0,t,\beta}\\
&\leq C_1(t-s)^{1-\beta}+\|\si\|\|B^H\|_{0,t,\beta}+3^{(q_0-1)^+}(t-s)^{1-\beta}\e^{\frac {t\bar K_1^+ q_0} 2} |Y(0)|\\
&\qquad + C_1 3^{(q_0-1)^+ } \left(\sqrt {\bar K_2}C_1\left(1+\|\sigma\|^{q_0}\|B^H\|_{0,t,\infty}^{q_0}\right)\right)^{q_0}\e^{\frac {q_0K_1^+t} 2}t^{q_0}(t-s)^{1-\beta}\\
&\qquad +C_1 3^{(q_0-1)^+  }   \left(\|\sigma\|\|B^H\|_{0,t,\infty}\right)^{q_0} (t-s)^{t-\beta}, 
\end{align*}
which yields 
\begin{align}\label{add-ineq-beY}
&\|Y\|_{0,t,\beta}\leq C_1t^{1-\beta}+\|\si\|\|B^H\|_{0,t,\beta}+C_13^{(q_0-1)^+}t^{1-\beta}\|\si\|^{q_0}\|B^H\|^{q_0}_{0,t,\infty}\nonumber \\
&\qquad  +C_13^{(q_0-1)^+}t^{1-\beta}\e^{\frac {t\bar K_1^+ q_0} 2} \left(\|Y\|_{0,t,\infty}+  t^{q_0}\left(\sqrt {\bar K_2}C_1\left(1+\|\sigma\|^{q_0}\|B^H\|_{0,t,\infty}^{q_0}\right)\right)^{q_0}\right).
\end{align}
Combining this with \eqref{ineq-Y-mon}, it is clear that our second claim holds.
 
\end{proof}

Next lemma is to investigate the exponential martingale, which is crucial to prove Theorem \ref{thm-ex-un}. Fix any $T>0$. Let 
\begin{align*}
\left\{\tilde B^H(t)\right\}_{t\in [0,T]}&=\left\{B^H(t)-\int_0^t Z(Y_s^\xi)\d s\right\}_{t\in [0,T]},\\
R^\xi(t)&=\exp\left(\int_0^t\left\<K_H^{-1}\left(\int_0^\cdot Z( Y_r^\xi)\d r\right)(s),\d  B_s\right\>\right.\\
&\qquad\qquad \left. -\frac{1}{2}\int_0^t\left|K_H^{-1}\left(\int_0^\cdot Z( Y_r^\xi)\d r\right)\right|^2(s)\d s\right), t\in[0,T].
\end{align*}  
\begin{lem}\label{lem-Eexph}
Let the assumptions of Theorem \ref{thm-ex-un} hold. Then
\begin{enumerate} 
\item[$(1)$] $\left\{\tilde B^H(t)\right\}_{t\in [0,T]}$ is a fractional Brownian motion under $R^\xi(T)\P$. 
\item[$(2)$] Assume in addition that $q_0=1$ in (A2). If  there exist $C_4\geq 0$, $C_5\geq 0$  and $p\in (0,1)$ such that
\begin{align}\label{equ-hh-2}
|Z(\eta_1)-Z(\eta_2)|\leq C_4\|\eta_1-\eta_2\|^{\alpha}_\infty\wedge \left(1+C_5(\|\eta_1\|_\infty^p+\|\eta_2\|_\infty^p)\right),
\end{align}
then for any $C\geq 0$
\beg{align}\label{Eexp}
\E\exp\left\{C\int_0^T\left|K_H^{-1}\left(\int_0^\cdot Z(Y^\xi_r)\d r\right)(s)\right|^2\d s\right\}<\infty.
\end{align}
\item[$(3)$] If \eqref{equ-hh-2} holds with $p=1$ and $T>0$ is small enough such that
\begin{align}\label{ineq-on-T}
&\left(\frac {2C_4^2T^{2-2H}(1+(H-1/2)^2C_0^2)} {(1-H)\Gamma^2(3/2-H)}\mathbf{1}_{[\alpha=1]}+\frac {16C_4^2C_5^2T^{2-2T}(H-1/2)^2} {(1-H)\Gamma^2(3/2-H)}\right)\|\sigma\|^2\nonumber\\
&\qquad <\left(C_1\Phi(\bar K_1,\bar K_2,T)+1\right)^{-2} \frac 1 {2T}
\end{align}
where $C_0$ is defined in Theorem \ref{thm1}, then \eqref{Eexp} holds for some $C>1$. 
\end{enumerate} 
\end{lem}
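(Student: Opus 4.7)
The plan is to establish parts (1)--(3) in order, with (1) reducing to the martingale property of $R^\xi$, and (2) and (3) being the quantitative exponential-integrability bounds that both underpin that martingale property and will be reused in the weak convergence proof.

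For part (1), I will use the fractional Girsanov theorem via the underlying Brownian motion $B$ introduced in Section 2. Since $B^H(t)=\int_0^t K_H(t,s)\d B(s)$, classical Girsanov tells us that under $R^\xi(T)\P$ the process $\tilde B(t):=B(t)-\int_0^t K_H^{-1}\bigl(\int_0^\cdot Z(Y^\xi_r)\d r\bigr)(s)\d s$ is a Brownian motion, provided that $R^\xi$ is a true $\P$-martingale on $[0,T]$. Applying $K_H$ to both sides and using $(K_H K_H^{-1})(\int_0^\cdot Z(Y^\xi_r)\d r)(t)=\int_0^t Z(Y^\xi_r)\d r$ then recovers $\tilde B^H(t)=B^H(t)-\int_0^t Z(Y^\xi_r)\d r$ as an fBm under the new measure. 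The martingale property of $R^\xi$ itself follows from Novikov using the exponential integrability of either (2) or (3); if only polynomial moments of $Z(Y^\xi_\cdot)$ are available I will fall back on a localisation with $\tau_n:=\inf\{t:\|Y^\xi\|_{0,t,\8}\geq n\}$, controlled via Lemma \ref{lem-mom}, and pass to the limit to obtain $\E R^\xi(T)=1$.

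The core of parts (2) and (3) is to bound the integrand $|K_H^{-1}(h)(s)|^2$ with $h(s)=\int_0^s Z(Y^\xi_r)\d r$. By \eqref{eqd} and the Weyl representation of the Riemann--Liouville derivative,
\begin{align*}
(K_H^{-1}h)(s)=\frac{s^{1/2-H}Z(Y^\xi_s)}{\GG(3/2-H)}+\frac{(H-1/2)s^{H-1/2}}{\GG(3/2-H)}\int_0^s\frac{s^{1/2-H}Z(Y^\xi_s)-r^{1/2-H}Z(Y^\xi_r)}{(s-r)^{H+1/2}}\d r.
\end{align*}
I split the inner numerator as $s^{1/2-H}[Z(Y^\xi_s)-Z(Y^\xi_r)]+Z(Y^\xi_r)[s^{1/2-H}-r^{1/2-H}]$. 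The first piece is bounded via the $\alpha$-Hölder estimate in \eqref{equ-hh-2} together with Hölder continuity of $Y^\xi$ of exponent $\beta\wedge\theta$ (coming from Lemma \ref{lem-mom} for $Y^{\xi(0)}$ with any $\beta<H$, and from \eqref{ineq-ini} for $\xi$); the resulting singular integral $\int_0^s(s-r)^{\alpha(\beta\wedge\theta)-H-1/2}\d r$ is finite precisely because $\alpha\theta>H-1/2$ and $\beta$ can be chosen with $\alpha\beta>H-1/2$. The second piece uses the growth bound $|Z(\eta)|\leq|Z(0)|+1+C_5\|\eta\|_\8^p$ together with the substitution $u=r/s$, which converts $\int_0^s(s-r)^{-H-1/2}(s^{1/2-H}-r^{1/2-H})\d r$ into $s^{1-2H}C_0$, producing precisely the constant $C_0$ appearing in \eqref{ineq-on-T}. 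The diagonal term $s^{1/2-H}Z(Y^\xi_s)/\GG(3/2-H)$ is handled analogously using only the growth bound.

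Assembling these estimates and integrating in $s$ yields a schematic inequality
\begin{align*}
\int_0^T|K_H^{-1}(h)(s)|^2\d s\leq \Psi_1(T,H)\bigl(1+\|Y^\xi\|_{0,T,\8}^{2p}\bigr)+\Psi_2(T,H)\|Y^\xi\|_{-\tau,T,\beta\wedge\theta}^{2\alpha}\1_{[\alpha=1]},
\end{align*}
with explicit polynomials $\Psi_i(T,H)$ involving $1/(1-H)$, $\GG(3/2-H)$ and $C_0$. For (2) with $p<1$ both contributions are sub-quadratic in $\|Y^\xi\|_\infty$, so the Fernique-type bound in Lemma \ref{F} combined with Lemma \ref{lem-mom} gives $\E\exp\{C(\cdots)\}<\8$ for every $C\geq0$. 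For (3) with $p=1$ the first contribution becomes quadratic in $\|Y^\xi\|_\8$; inserting the bound \eqref{ineq-Y-mon} to dominate $\|Y^\xi\|_\8$ by (deterministic) $C_1\Phi(\bar K_1,\bar K_2,T)$ plus $\|\sigma\|\|B^H\|_\8$, the Fernique bound yields finiteness of the exponential moment exactly when the total coefficient in front of $\|B^H\|_\8^2$ stays below $1/(2T)$, and \eqref{ineq-on-T} is exactly this threshold with slack allowing some $C>1$. The main obstacle will be bookkeeping of the constants in the double singular integral estimate sharply enough that the prefactor of $\|B^H\|_\8^2$ actually matches the left-hand side of \eqref{ineq-on-T}; in particular isolating the $\1_{[\alpha=1]}$ term (which is the only place where $\alpha$-Hölder information enters the squared bound with a coefficient of order $T^{2-2H}/(1-H)$) from the generic linear-growth term (driven by $p=1$ and the constant $C_5$) is what reproduces the two summands on the LHS of \eqref{ineq-on-T}.
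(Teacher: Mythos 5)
Your decomposition of $K_H^{-1}\bigl(\int_0^\cdot Z(Y^\xi_r)\d r\bigr)$ via the Weyl representation (diagonal term, kernel-difference term producing $C_0$, increment term) is exactly the paper's $J_1,J_2,J_3$, and your treatment of part (3) — dominating $\|Y^\xi\|_{-\tau,T,\infty}$ by \eqref{ineq-Y-mon} and reading \eqref{ineq-on-T} as the Fernique threshold $1/(2T)$ — matches the paper. However, there are two genuine gaps.

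First, in part (1) you defer the martingale property to Novikov ``using the exponential integrability of (2) or (3)'', but part (1) is asserted under (A1)--(A3) alone, where neither (2) nor (3) is available; your fallback is a localisation with ``pass to the limit to obtain $\E R^\xi(T)=1$'', which is precisely the nontrivial step. The paper closes it by proving $\sup_n\E R^\xi(T\wedge\tau_n)\log R^\xi(T\wedge\tau_n)<\infty$, i.e.\ uniform integrability of the stopped densities, and this requires moment bounds for $Y^\xi$ \emph{under the tilted measures} $R^\xi(T\wedge\tau_n)\P$, where $Y^\xi$ solves the equation with the additional drift $\sigma Z(Y^\xi_t)$. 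That is exactly what the one-sided condition \eqref{equ-hh-1} in (A3) is designed for; your proposal never invokes \eqref{equ-hh-1} and offers no mechanism for the limit, so part (1) is not established.

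Second, in part (2) you bound the increment $Z(Y^\xi_s)-Z(Y^\xi_r)$ purely by the $\alpha$-H\"older estimate, yielding a contribution of order $\|Y^\xi\|_{-\tau,T,\theta}^{2\alpha}$. For $\alpha=1$ (which (A3) permits) this is exactly quadratic in a Gaussian-tailed norm with a coefficient that is not small, so Lemma \ref{F} cannot give \eqref{Eexp} for \emph{every} $C\geq 0$; indeed your own schematic inequality contains the term $\Psi_2\|Y^\xi\|^{2}_{-\tau,T,\beta\wedge\theta}\1_{[\alpha=1]}$, contradicting the claim that both contributions are sub-quadratic when $p<1$. The missing idea is to exploit the minimum in \eqref{equ-hh-2} \emph{inside} the singular integral (the paper's use of \cite[Lemma 3.4]{FZ}): interpolating between the H\"older bound near the diagonal $r\approx s$ and the linear-growth bound away from it reduces the exponent on the H\"older norm to $2p+\frac{(2H-1)(\alpha-p)^+}{\theta\alpha}$, which is strictly less than $2$ precisely because $p<1$ and $\theta\alpha>H-\tfrac12$. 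Without this interpolation, part (2) fails for $\alpha=1$.
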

\begin{proof}
If \eqref{ineq-ini} holds for $\theta\geq H$, then \eqref{ineq-ini} holds for $\theta\in (H-1/2,H)$. Hence, we shall assume that $\theta\in (H-1/2,H)$ in the following proof.

(1) It follows from \eqref{eqd} that 
\begin{align}\label{ineq-KHZ}
K_H^{-1}\left(\int_0^\cdot Z(Y^\xi_r)\d r\right)(s)
&=s^{H-\frac{1}{2}}D_{0+}^{H-\frac{1}{2}}\left(\cdot^{\frac{1}{2}-H} Z( Y^\xi_\cdot)\right)(s)\nonumber\\
& =\frac{H-\frac{1}{2}}{\Gamma(\frac{3}{2}-H)}\Bigg[\frac{s^{\frac{1}{2}-H}}{H-\frac{1}{2}} Z( Y^\xi_s)
+s^{H-\frac{1}{2}} Z( Y^\xi_s)\int_0^s\frac{s^{\frac{1}{2}-H}-r^{\frac{1}{2}-H}}{(s-r)^{\frac{1}{2}+H}}\d r\nonumber\\
& \qquad+s^{H-\frac{1}{2}}\int_0^s\frac{ Z(Y^\xi_s)-Z(Y^\xi_r)}{(s-r)^{\frac{1}{2}+H}}r^{\frac{1}{2}-H}\d r\Bigg]\nonumber\\
& =:\frac{H-\frac{1}{2}}{\Gamma(\frac{3}{2}-H)}(J_1(s)+J_2(s)+J_3(s)).
\end{align}
By \eqref{equ-hh}, we have
\begin{align}\label{add-ineq-J1}
\left|J_1(s)\right|^2&\leq \frac {s^{1-2H}} {(H-\frac 1 2)^2}\left(C_2\left(1+\|Y^\xi_s\|_\infty^p\right)\|Y^\xi_s\|_\infty^\alpha+|Z(0)|\right)^2\nonumber\\
&\leq Cs^{1-2H}\left(1+\|Y^\xi\|_{-\tau,s,\infty}^{2(p+\alpha)}\right).
\end{align}
\begin{align}\label{add-ineq-J3}
\left|J_3(s)\right|&\leq s^{H-\frac 1 2}\int_0^s\frac {\|Y^\xi_s-Y^\xi_r\|_\infty^\alpha\left(1+\|Y^\xi_s\|_\infty^p+\|Y^\xi_r\|_\infty^p\right)} {(s-r)^{H+1/2} r^{H-1/2}}\d r \nonumber\\
&\leq 2s^{H-\frac 1 2}\left(1+\|Y^\xi\|_{-\tau,s,\infty}^p\right)\int_0^s \frac {\|Y^\xi\|^\alpha_{-\tau,r,\theta}(s-r)^{\theta\alpha}} {(s-r)^{H+1/2}r^{H-1/2}}\d r \nonumber\\
&=2s^{\theta\alpha+\frac 1 2-H}\mathcal{B}\left(\frac 3 2-H,\theta\alpha+\frac 1 2-H\right)\left(1+\|Y^\xi\|_{-\tau,s,\infty}^p\right)\|Y^\xi\|^\alpha_{-\tau,s,\theta}, 
\end{align}
where $\mathcal{B}$ is Beta function.
For  $J_2$, we have
\beg{align}\label{add-ineq-J2}
|J_2(s)|^2&\leq s^{2H-1}\left|\int_0^s\frac{s^{\frac{1}{2}-H}-r^{\frac{1}{2}-H}}{(s-r)^{\frac{1}{2}+H}}\d r\right|^2\left(C_2\left(1+\|Y^\xi_s\|_\infty^p\right)\|Y^\xi_s\|_\infty^\alpha+|Z(0)|\right)^2\nonumber\\
&\leq Cs^{1- 2H}\left(1+\|Y^\xi\|_{-\tau,s,\infty}^{2(p+\alpha)}\right)^2.
\end{align}
  Then it is clear that 
$$\int_0^t\left|K_H^{-1}\left(\int_0^\cdot Z(Y^\xi_r)\d r\right)(s)\right|^2\d s<\infty, t\in [0,T], \P\mbox{-a.s.}$$

Let
$$\tau_n=\inf\left\{t>0~\Big|~\int_0^t\left|K_H^{-1}\left(\int_0^\cdot Z(Y^\xi_r)\d r\right)(s)\right|^2\d s\geq n\right\},\qquad n\in\mathbb N.$$
Then
$$\tilde B_{n}(t):=B(t)-\int_0^{t\wedge \tau_n} K_H^{-1}\left(\int_0^\cdot Z(Y^\xi_r)\d r\right)(s)\d s,~t\geq 0$$
is a Brownian motion  under $R(T\wedge \tau_n)\P$. This implies
$$\tilde B^{H}_{n}(t):=B^H(t)-\int_0^{t\wedge \tau_n} Z(Y^\xi_r)\d r,~t\geq 0$$
is a fBM under $R^\xi({T\wedge \tau_n})\P$ and $Y^\xi$ satisfies 
$$\d Y^\xi(t)=b(Y^\xi(t))\d t+\sigma\d \tilde B^H_{n}(t)+\mathbf{1}_{[0\leq t\leq \tau_n]}\sigma Z(Y^\xi_t)\d t.$$
By \eqref{equ-hh-1}, following the proof of \eqref{ineq-Y-mon} and \eqref{add-ineq-beY}, we have
\begin{align*}
&|Y^\xi(t)|^2\leq |Y^\xi(0)|^2+2\int_0^t\left(\bar K_1+C_3\right)\|Y^\xi_s\|^2\d s\\
& +\int_0^t\left\{2(C_3+\bar K_1) \|\sigma\|\| \tilde B^H_{n,s}\|^2_\infty+\bar K_2C_1^2(1+|\sigma \tilde B^H_n(s)|^{q_0})^2+2C_3(1+\|\si\|\|\tilde B^H_{n,s}\|_{\infty}^{q_1})\right\}\d s
\end{align*}
and 
\begin{align*}
\|Y\|_{0,t,\beta}&\leq \left\{C_1\left(1+\|Y^\xi\|_{0,t,\infty}^{q_0}\right)+\|\sigma\| | Z(0)|+C_2\|\sigma\|\|Y^\xi\|_{-\tau,t,\infty}^{\alpha}\left(1+\|Y^\xi\|_{-\tau,t,\infty}^p\right)\right\}t^{1-\beta}\\
&\qquad  +\|\sigma\|\|\tilde  B_n^H\|_{0,t,\beta},~t>0,\beta\in (0,H).
\end{align*} 
Combining these with \eqref{ineq-ini} and that $\tilde B^{H}_{n}$ under $R^\xi(T\wedge\tau_n)\P$ has the same distribution as $B^H$ under $\P$, we have 
$$\sup_{n}\E R(T\wedge \tau_n)\left(\|Y^\xi\|_{-\tau,T,\infty}^q+\|Y^\xi\|_{-\tau,T,\theta }^q\right)<\infty,\qquad q>0, \, \theta<H.$$
Then by \eqref{ineq-KHZ}-\eqref{add-ineq-J3}, 
\begin{align*}
&\sup_{t\in [0,T], n}\E R^\xi(t\wedge\tau_n)\log R^\xi(t\wedge\tau_n)\\
&\qquad\leq C\sup_{t\in [0,T],n}\E R(t\wedge\tau_n)\left(1+\|Y^\xi\|_{-\tau,t,\infty}^{2(p+\alpha)}+\|Y^\xi\|_{-\tau,t,\theta}^{2\alpha}+\|Y^\xi\|_{-\tau,t,\theta}^{2\alpha}\|Y\|_{-\tau,t,\infty}^{2p}
\right)\\
&\qquad <\infty.
\end{align*}
Hence, it follows from the Fatou lemma and the martingale convergence theorem that $\{R^\xi(t)\}_{t\in [0,T]}$ is a uniformly integrable martingale and
$$\sup_{t\in [0,T]}\E R^\xi(t)\log R^\xi(t)<\infty.$$
It follows from Girsanov's theorem that under $R^\xi(T)\P$,  the process $\tilde B^H$ is a fBM.

(2) By \eqref{equ-hh-2}, we have 
\begin{align}\label{ineq-J1-J2}
\left|J_1(s)\right|^2+\left|J_2(s)\right|^2\leq C_4^2\left(\left(H-\frac 1 2\right)^{-2}+C_0^2\right)s^{1-2H}\left(\left(1+C_5\|Y^\xi\|_{-\tau,s,\infty}^{2p}\right)\wedge \|Y^\xi\|_{-\tau,s,\infty}^{2\alpha}\right).
\end{align} 
For $J_3$,  we can estimate
\begin{align*}
\left|J_3(s)\right|&\leq C_4 s^{H-\frac 1 2}\int_0^s\frac {\|Y^\xi_s-Y^\xi_r\|_\infty^\alpha\wedge\left(1+C_5(\|Y^\xi_s\|_\infty^p+\|Y_r\|_\infty^p)\right)} {(s-r)^{H+1/2} r^{H-1/2}}\d r \\
&\leq 2C_4s^{H-\frac 1 2}\int_0^s \frac {\left(1+C_5\|Y^\xi\|_{-\tau,s,\infty}^p\right)\wedge\left(\|Y^\xi\|^\alpha_{-\tau,s,\theta}(s-r)^{\theta\alpha}\right)} {(s-r)^{H+1/2}r^{H-1/2}}\d r \\
&\leq 2C_4s^{H-\frac 1 2}\left(1+C_5\|Y^\xi\|_{-\tau,s,\infty}^p\right)\int_0^s \frac { \|Y^\xi\|^\alpha_{-\tau,s,\theta}(s-r)^{\theta\alpha-H-1/2}r^{-H+1/2}} {1+C_5\|Y^\xi\|_{-\tau,s,\infty}^p+\|Y^\xi\|^\alpha_{-\tau,s,\theta}(s-r)^{\theta\alpha}}\d r \\
&\leq 2C_4s^{H-1/2}\left(1+C_5\|Y^\xi\|_{-\tau,s,\infty}^p\right)\\
&\qquad\times\left(\frac {2^{2H-1}s^{1-2H}} {3/2-H}+\frac {\theta\alpha 2^{H-1/2}s^{\frac 1 2-H}\|Y^\xi\|_{-\tau,s,\theta}^{\frac {H-1/2} {\theta}}\left(1+C_5\|Y^\xi\|_{-\tau,s,\infty}^{p}\right)^{-\frac {H-1/2} {\theta\alpha}}} {(\alpha\theta-H+1/2)(H-1/2)} \right),
\end{align*}
where we use \cite[Lemma 3.4]{FZ} in the last inequality. Thus
\begin{align}\label{ineq-J3-1}
\left|J_3(s)\right|^2\leq C_6 s^{1-2H}(1+C_5\|Y^\xi\|_{-\tau,s,\infty}^{p})^2+C_7\left(1+C_5\|Y^\xi\|_{-\tau,s,\infty}^p\right)^{\frac {2\theta\alpha-2H+1} {\theta\alpha}}\|Y^\xi\|_{-\tau,s,\theta}^{\frac {2H-1} {\theta}},
\end{align} 
where 
$$C_6=8C_4^2,\qquad C_7=\left(\frac {\theta\alpha 2^H} {(\alpha\theta-H+1/2)(H-1/2)}\right)^2.$$
Since $\|Y^\xi\|_{-\tau,s,\infty}\leq (s+\tau)^{\theta}\|Y^\xi\|_{-\tau,s,\theta}$, it follows from \eqref{ineq-J3-1} and \eqref{ineq-J1-J2} that 
\begin{align*}
&\E\exp\left\{C\int_0^T\left|K_H^{-1}\left(\int_0^\cdot Z(Y^\xi_r)\d r\right)(s)\right|^2\d s\right\}\\
&\leq \E \exp\left(C_T\left(1+\|Y^\xi\|_{-\tau,s,\theta}^{2p\vee \frac {2\theta\alpha p+(2H-1)(\alpha-p)} {\theta\alpha}}\right)\right)\\
&=\E \exp\left(C_T\left(1+\|Y^\xi\|_{-\tau,s,\theta}^{2p+ \frac { (2H-1)(\alpha-p)^+} {\theta\alpha}}\right)\right).
\end{align*}
For $p<1$, it is clear that
$$2p+ \frac { (2H-1)(\alpha-p)^+} {\theta\alpha}<2.$$
Then \eqref{Eexp} follows from  \eqref{add-ineq-beY} with $q_0=1,\beta=\theta$ and the Fernique-type lemma. 

(3) For $p=1$, substituting \eqref{ineq-J1-J2} and  \eqref{ineq-J3-1} into \eqref{ineq-KHZ}, we have 
\begin{align*}
\int_0^T\left|K_H^{-1}\left(\int_0^\cdot Z(Y^\xi_r)\d r\right)(s)\right|^2\d s&\leq C_8(T)+\frac {2C_4^2T^{2-2H}(1+(H-1/2)^2C_0^2)} {(1-H)\Gamma^2(3/2-H)}\|Y^\xi\|_{-\tau,T,\infty}^{2\alpha}\\
&\qquad+\frac {2C_6C_5^2T^{2-2T}(H-1/2)^2} {(1-H)\Gamma^2(3/2-H)}\|Y^\xi\|_{-\tau,T,\infty}^{2}\\
&\qquad+4C_7C_5^2T^{1+\theta}\Gamma^{-2}(3/2-H)\|Y^\xi\|_{-\tau,T,\theta}^{1+\frac {2H-1} {\theta}(1-\alpha^{-1})}.
\end{align*}
It follows from \eqref{ineq-Y-mon} and (A2) with $q_0=1$, we have
\begin{align*}
\|Y^\xi\|_{0,T,\infty}\leq \tilde C(T)+\left(C_1\Phi(\bar K_1,\bar K_2,T)+1\right)\|\sigma\|\|B^H\|_{0,T,\infty}.
\end{align*}
Therefore, for $T>0$ such that 
\begin{align*}
&\left(\frac {2C_4^2T^{2-2H}(1+(H-1/2)^2C_0^2)} {(1-H)\Gamma^2(3/2-H)}\mathbf{1}_{[\alpha=1]}+\frac {2C_6C_5^2T^{2-2T}(H-1/2)^2} {(1-H)\Gamma^2(3/2-H)}\right)\|\sigma\|^2\\
&\qquad <\left(C_1\Phi(\bar K_1,\bar K_2,T)+1\right)^{-2} \frac 1 {2T},
\end{align*}
it follows from Lemma \ref{F} that there is some $C>1$ such that
$$\E\exp\left\{C\int_0^T\left|K_H^{-1}\left(\int_0^\cdot Z(Y^\xi_r)\d r\right)(s)\right|^2\d s\right\} <\infty.$$
.

\end{proof}

\noindent{\bf{Proof of Theorem \ref{thm-ex-un}}}

We first show the existence of weak solution to \eqref{eq1.1}. It follows from (A1)-(A3) and Lemma \ref{lem-Eexph} that  $R^\xi(t)$
is an exponential martingale and $\tilde B^H(t)$ is a fBM under $R^\xi(T)\P$. Reformulating  the  reference SDE \eqref{eq1.6} as follows:
\begin{align}\label{eq1.8}
\d Y^\xi(t)=b(Y^\xi(t))\d t+\sigma Z(Y_t^\xi)\d t+\sigma\d\tilde{B}^H(t),
\end{align}
then under the complete filtration probability $(\Omega,\mathscr{F},(\mathscr{F}_t)_{t\in [0,T]},R^\xi(T)\P)$,  $\{Y^\xi(t)\}_{t\in [0,T]}$ is a  solution of \eqref{eq1.1}.

We shall  show the uniqueness of weak solutions to \eqref{eq1.1}.  For $i=1,2$, let  $(Y^{(i),\xi}(t))_{t\geq 0}$ be two weak solutions to \eqref{eq1.1} driven by  fBM $\left({B}_i^H(t)\right)_{t\geq 0}$ under the complete filtration probability
space $\left(\Omega^{(i)},\{\mathscr{F}_t^{(i)}\}_{t\ge0},\P^{(i)}\right)$ with $Y^{(i),\xi}_0=\xi$ satisfying \eqref{eq1.4}. Denote by $\P^{(i),\xi}$ the distribution of $Y^{(i),\xi}$. We intend to prove that  $\P^{(1),\xi}=\P^{(2),\xi}$. It follows from \eqref{eq1.1} that $Y^\xi(\cdot)\in C^\beta([0,T],\R^d)$ for any $\beta\in (0,H)$. Since $\xi\in C^\theta([-\tau,0],\R^d)$, $Y^\xi_\cdot\in C^{\theta\wedge\beta}([0,T],\mathscr{C})$. Let $\beta>\frac {2H-1} {2\alpha}$. Then $(\theta\wedge \beta)\alpha+\frac 1 2-H>0$. By \eqref{ineq-KHZ}-\eqref{add-ineq-J3}, we have 
$$\int_0^t\left|K_H^{-1}\left(\int_0^\cdot Z(Y^{(i),\xi}_r)\d r\right)(s)\right|^2\d s<\infty,~t\in [0,T],~\P\mbox{-a.s.}$$  

The rest of the proof can be complete by along the lines of the proof of \cite[Theorem 2.1]{W19}, we omit it here.

\section{Proof of Theorem \ref{thm1}}
Before giving the proof for Theorem \ref{thm1}, we  prepare two lemmas. The lemma below shows the estimates of uniform norm and H\"older norm of $(Y^\xi(t))_{t\in[0,T]}$ of the solution to \eqref{eq1.6}, respectively.
\begin{lem}\label{lem1.2}
Assume (H1). Then  for any $T>0$
\begin{align}\label{eq1.9}
\|Y^\xi\|_{-\tau,T,\8}&\le\|\xi\|_\infty+ |b(0)|\Phi(\bar K_1,\bar K_2,T)  +\left(L_1\Phi(\bar K_1,\bar K_2,T)+1\right)\|\sigma\|\|B^H\|_{\infty}. \\
\|Y^\xi\|_{-\tau,T,\beta\wedge\theta}&\le T^{1-\beta\wedge\theta}\left(|b(0)|+|b(0)|L_1\Phi(\bar K_1,\bar K_2,T)+|\xi(0)|\right)+\|\sigma\|\|B^H\|_{\beta\wedge\theta}\nonumber\\
&\qquad +L_1T^{1-\beta\wedge\theta}\left(L_1\Phi(\bar K_1,\bar K_2,T) +1\right)\|\sigma\|\|B^H\|_{\infty}+\|\xi\|_{-\tau,0,\beta\wedge\theta}.
\end{align}
\end{lem}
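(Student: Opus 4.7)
My strategy is to prove both estimates by splitting the time interval into $[-\tau,0]$, on which $Y^\xi$ coincides with $\xi$, and $[0,T]$, on which $Y^{\xi(0)}$ solves the reference SDE \eqref{eq1.6}. The $[0,T]$ part of the uniform bound will be read off from \eqref{ineq-Y-mon} in Lemma \ref{lem-mom}, while the H\"older estimate will be obtained directly from the integral form of the SDE using the Lipschitz condition \eqref{eq1.2} together with the H\"older regularity of $B^H$ and of $\xi$ given by \eqref{eq1.4}.

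For the uniform bound, the restriction to $[-\tau,0]$ is trivially at most $\|\xi\|_\infty$, so it suffices to estimate $\sup_{0\le t\le T}|Y^{\xi(0)}(t)|$. I would apply \eqref{ineq-Y-mon} with $|Y(0)|=|\xi(0)|\le\|\xi\|_\infty$, then bound $|b(\sigma B^H(r))|\le |b(0)|+L_1\|\sigma\|\|B^H\|_\infty$ via \eqref{eq1.2}. The deterministic integral then collapses to $\sqrt{\bar K_2(e^{\bar K_1 T}-1)/\bar K_1}=\Phi(\bar K_1,\bar K_2,T)$, which produces the factors in front of $|b(0)|$ and $L_1\|\sigma\|\|B^H\|_\infty$; the final summand $\|\sigma\|\|B^H\|_\infty$ comes from the stochastic term $|\sigma B^H(t)|$ in \eqref{ineq-Y-mon}.

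For the H\"older seminorm I would split pairs $-\tau\le s<t\le T$ into three regions. When $s,t\in[-\tau,0]$ the bound $|\xi(t)-\xi(s)|\le \|\xi\|_{-\tau,0,\beta\wedge\theta}(t-s)^{\beta\wedge\theta}$ follows directly from \eqref{eq1.4} (since $\theta\ge\beta\wedge\theta$). When $s,t\in[0,T]$, I would write $Y(t)-Y(s)=\int_s^t b(Y(r))\,\d r+\sigma(B^H(t)-B^H(s))$, estimate the drift by $(t-s)(|b(0)|+L_1\|Y\|_{0,T,\infty})$ and the noise by $\|\sigma\|\|B^H\|_{\beta\wedge\theta}(t-s)^{\beta\wedge\theta}$; dividing by $(t-s)^{\beta\wedge\theta}$ generates the prefactor $T^{1-\beta\wedge\theta}$ and substituting the uniform bound from the previous step yields all the $\Phi(\bar K_1,\bar K_2,T)$-terms. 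Finally, for $s\in[-\tau,0)$ and $t\in(0,T]$, the triangle inequality through $0$ combined with $(t-s)^{\beta\wedge\theta}\ge t^{\beta\wedge\theta}$ and $(t-s)^{\beta\wedge\theta}\ge|s|^{\beta\wedge\theta}$ reduces this mixed case to the two previous ones. The only somewhat delicate point is this mixed-interval case, where one must carefully align the denominators so that the contributions of $\xi$ and of $Y$ on $[0,T]$ are separated cleanly; the rest is an essentially mechanical propagation of \eqref{ineq-Y-mon} through the linear growth of $b$ and the H\"older regularity of the driving noise.
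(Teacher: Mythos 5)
Your proposal is correct and follows essentially the same route as the paper: the uniform bound is read off from \eqref{ineq-Y-mon} combined with the linear growth $|b(x)|\le|b(0)|+L_1|x|$ from (H1), and the H\"older bound comes from the splitting $\|Y^\xi\|_{-\tau,T,\beta\wedge\theta}\le\|\xi\|_{-\tau,0,\beta\wedge\theta}+\|Y^\xi\|_{0,T,\beta\wedge\theta}$ (your three-case analysis is exactly the justification of this inequality, which the paper states without proof) followed by the integral-form estimate on $[0,T]$ as in the second part of the proof of Lemma \ref{lem-mom}.
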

\begin{proof}
The first inequality follows from \eqref{ineq-Y-mon} and (H1) directly. Since $b$ is Lipschitz, 
$$|b(x)|\leq |b(0)|+L_1|x|.$$
Taking into account the following inequality
\begin{align*}
\|Y^\xi\|_{-\tau,T,\beta\wedge\theta}\le\|\xi\|_{-\tau,0,\beta\wedge\theta}+\|Y^\xi\|_{\beta\wedge\theta},
\end{align*}
the proof of the  second  inequality is  similar to the second part of the proof of Lemma \ref{lem-mom}.

\end{proof}

For the sake of simplicity, we denote $\Q^\xi=R^\xi(T)\P$,
\begin{align*}
h^\xi(t)=\hat{\sigma}^{-1}\{b(Y^\xi(t))-b(Y^\xi(t_\delta)\}-Z(\hat{Y}_t^\xi),~~t\ge0,
\end{align*} 
with 
$$\hat{Y}_t^\xi(u)=Y^\xi((t+u)\wedge t_\delta),~~u\in[-\tau,0].$$
Let
\begin{align}
 B_h^H(t)&=B^H(t)+\int_0^t h^\xi(s)\d s,\nonumber\\
R^{\xi,\delta}(t)&=\exp\Big\{-\int_0^t\Big\langle K_H^{-1}\big(\int_0^\cdot h^\xi(s)\d s\big)(r),\d B(r)\Big\rangle\nonumber\\
&\qquad\qquad-\frac{1}{2}\int_0^t\Big|K_H^{-1}\big(\int_0^\cdot h^\xi(s)\d s\big)(r)\Big|^2\d r\Big\}, t\in [0,T],\label{eq2.4}
\end{align}
and let $\d\Q^{\xi,\delta}=R^{\xi,\delta}(T)\d \P$. Then it follows from Lemma \ref{lem1.35} below and the Girsanov theorem that $\Q^{\xi,\delta}$ is a probability and $(B_h^H(t))_{t\in[0,T]}$ is a  fBM under $\Q^{\xi,\delta}$.
Since $\sigma\sigma^{-1}=\pi_*$, we can  rewrite  the reference SDE \eqref{eq1.6}  into the following form
\begin{align}\label{eq1.11}
\d Y^\xi(t)=\{(I_{d\times d}-\pi_*)b(Y^\xi(t))+\pi_*b(Y^\xi(t_\delta))+\sigma Z(\hat{Y}_{t}^\xi)\}\d t+\sigma\d B_h^H(t),
\end{align}
which implies  that $(Y^\xi(t),B_h^H(t))_{t\in[0,T]}$ is a weak solution of \eqref{eq1.5}.
Obviously, \eqref{eq1.5} has a unique pathwise solution,  so the weak uniqueness follows.  Then
$$|\E f(X(t))-\E f(X^{(\delta)}(t))|=|\E_{\Q^\xi}f(Y^\xi(t))-\E_{\Q^{\xi,\delta}}f(Y^\xi(t))|=|\E(R^\xi(t)-R^{\xi,\delta}(t))f(Y^\xi(t))|.$$
Hence, in the following discussion, we shall prove that $\{R^{\xi,\delta}(t)\}_{t\in [0,T]}$ is a exponential martingale and give estimates of $R^\xi(t)-R^{\xi,\delta}(t)$.

\begin{lem}\label{lem1.35}
Under the assumptions of Theorem \ref{thm1}, we have 
$$\E\exp\left\{C\int_0^t|K_H^{-1}\big(\int_0^\cdot h^\xi(s)\d s\big)(r)|^2\d r\right\}< \infty,$$
for some $C>1$.
\end{lem}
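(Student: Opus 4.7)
The plan is to adapt the argument of Lemma \ref{lem-Eexph}(3) to the piecewise-time-frozen structure of the EM scheme. Decompose
$$h^\xi(t)=h^\xi_b(t)-Z(\hat Y_t^\xi),\qquad h^\xi_b(t)=\hat\sigma^{-1}\{b(Y^\xi(t))-b(Y^\xi(t_\delta))\},$$
and, using \eqref{eqd} together with the Weyl form of $D_{0+}^{H-1/2}$, write
$$K_H^{-1}\Bigl(\int_0^\cdot h^\xi(s)\d s\Bigr)(r)=\frac{H-1/2}{\Gamma(3/2-H)}\bigl(J_1(r)+J_2(r)+J_3(r)\bigr),$$
exactly as in \eqref{ineq-KHZ}, and further split each $J_i$ into contributions $J_i^b$ from $h^\xi_b$ and $J_i^Z$ from $-Z(\hat Y_\cdot^\xi)$. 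The target is a bound of the form $(\text{deterministic finite}) + \Lambda_1 \|B^H\|_{0,T,\beta}^2 + \Lambda_2 \|B^H\|_{0,T,\infty}^2$ whose coefficients are exactly those appearing on the left-hand sides of \eqref{eq2.5} and \eqref{add-fZ}; Lemma \ref{F} then upgrades this to exponential integrability for some $C>1$ thanks to the strict inequalities in those hypotheses.

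For the drift part $h^\xi_b$, the Lipschitz condition in (H1) and the H\"older estimate of Lemma \ref{lem1.2} yield the pointwise bound $|h^\xi_b(r)|\leq L_1\|\hat\sigma^{-1}\|\|Y^\xi\|_{0,T,\beta}\delta^\beta$, which handles $J_1^b$ and $J_2^b$ and produces the $\delta^{2\beta}$ contribution of the first block of \eqref{eq2.5}. For $J_3^b$, I would split the region $\{0\le s\le r\le T\}$ according to whether $s$ and $r$ lie in the same subinterval $[k\delta,(k+1)\delta)$ or in distinct ones; in the second case one uses
$$|h^\xi_b(r)-h^\xi_b(s)|\leq L_1\|\hat\sigma^{-1}\|\bigl(|Y^\xi(r)-Y^\xi(s)|+|Y^\xi(r_\delta)-Y^\xi(s_\delta)|\bigr),$$
while in the first case a purely pointwise bound of size $\delta^\beta$ suffices. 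Integrating against the singular kernel $(r-s)^{-1/2-H}s^{1/2-H}$ over these subregions produces the three scales $\delta^{2\beta}$, $\delta^{2\beta+1-2H}$, $\delta^{2\beta+4-4H}$ together with the combinatorial constants $1/(1+2\beta-2H)^2$, $16^H/(3-2H)^2$, $2^{2H+1}/(2H-1)^2$ of \eqref{eq2.5}.

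For the $Z$-part, (H2) gives $|Z(\hat Y_r^\xi)|\leq |Z(0)|+L_2\|Y^\xi\|_{-\tau,T,\infty}^\alpha$, handling $J_1^Z$ and $J_2^Z$; this is the source of the $\alpha=1$ indicator term in \eqref{add-fZ} and of the first line of the second block of \eqref{eq2.5}. The delicate step is $J_3^Z$: since $\hat Y_r^\xi(u)=Y^\xi((r+u)\wedge r_\delta)$, the truncation at $r_\delta$ forces $\|\hat Y_r^\xi-\hat Y_s^\xi\|_\infty$ to be effectively measured over the grid, yielding
$$|Z(\hat Y_r^\xi)-Z(\hat Y_s^\xi)|\leq L_2\|Y^\xi\|^\alpha_{-\tau,T,\beta\wedge\theta}(r-s)^{\alpha(\beta\wedge\theta)}$$
away from the grid, together with an in-grid bound that produces the $\delta^{2\alpha(\beta\wedge\theta)+1-2H}$ factor. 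The Beta-function identity $\int_0^r(r-s)^{\alpha(\beta\wedge\theta)-H-1/2}s^{1/2-H}\d s=r^{\alpha(\beta\wedge\theta)+1-2H}\mathcal{B}(3/2-H,\alpha(\beta\wedge\theta)+1/2-H)$ then produces the $\mathcal{B}^2$ term of \eqref{eq2.5}. Finally, Lemma \ref{lem1.2} replaces $\|Y^\xi\|_{-\tau,T,\infty}$ and $\|Y^\xi\|_{-\tau,T,\beta\wedge\theta}$ by affine functions of $\|B^H\|_{0,T,\infty}$ and $\|B^H\|_{0,T,\beta\wedge\theta}\leq\|B^H\|_{0,T,\beta}$, and under \eqref{add-fZ}-\eqref{eq2.5} there is enough slack to pick $C>1$ with the resulting coefficient of $\|B^H\|_{0,T,\beta}^2$ still below $1/(128(2T)^{2(H-\beta)})$, giving the claim via Lemma \ref{F}.

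The main obstacle will be the precise case-split for $J_3^b$ and $J_3^Z$ around the grid $\{k\delta\}$: it is exactly this analysis that both extracts the sharp exponent $\alpha(\beta\wedge\theta)+\frac{1}{2}-H$ driving the convergence rate \eqref{eq2.1} and keeps the final coefficients inside the Fernique tolerance. In particular, the almost-cancellation $Y^\xi(r)-Y^\xi(r_\delta)=0$ at grid points in the $b$-part, and the truncation-induced improvement $\|\hat Y_r^\xi-\hat Y_s^\xi\|_\infty=O(\delta^{\beta\wedge\theta})$ for $r-s\leq\delta$ in the $Z$-part, must be tracked with matching constants; once they are, the remainder of the argument is a direct combination of the Weyl decomposition estimates with Lemma \ref{lem1.2} and Lemma \ref{F}.
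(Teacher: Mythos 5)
Your proposal is correct and follows essentially the same route as the paper's proof: the Weyl decomposition of $K_H^{-1}$ (the paper merely groups your $J_1$ and $J_2$ into a single term with the constant $1+C_0(H-\frac12)$), the pointwise $\delta^\beta$ bound on the frozen-drift increment, the grid-based case split for the difference terms using $\|\hat Y_r^\xi-\hat Y_s^\xi\|_\infty\le\|Y^\xi\|_{-\tau,r,\beta\wedge\theta}((r-s)\vee(r_\delta-s_\delta))^{\beta\wedge\theta}$, the Beta-function integrals, and the reduction via Lemma \ref{lem1.2} to a quadratic functional of $\|B^H\|_{0,T,\beta}$ and $\|B^H\|_{0,T,\infty}$ within the Fernique tolerance of Lemma \ref{F} under \eqref{eq2.5}. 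No gaps of substance.
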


\begin{proof}
The definition of inverse operator $K_H^{-1}$ yields that
\begin{align}\label{eqh1}
K_H^{-1}&\Big(\int_0^\cdot h^\xi(s)\d s\Big)(r)=r^{H-\frac{1}{2}}D_{0+}^{H-\frac{1}{2}}[\cdot^{\frac{1}{2}-H}h^\xi(\cdot)](r)\nonumber\\
&=r^{H-\frac{1}{2}}\frac{1}{\Gamma(\frac{3}{2}-H)}\Big(
\frac{r^{\frac{1}{2}-H}h_1^\xi(r)}{r^{H-\frac{1}{2}}}+(H-\frac{1}{2})\int_0^r\frac{r^{\frac{1}{2}-H}h^\xi(r)-s^{\frac{1}{2}-H}h^\xi(s)}
{(r-s)^{H+\frac{1}{2}}}\d s\Big)\nonumber\\
&=\frac{r^{\frac{1}{2}-H}h_1^\xi(r)}{\Gamma(\frac{3}{2}-H)}
+\frac{r^{H-\frac{1}{2}}}{\Gamma(\frac{3}{2}-H)}(H-\frac{1}{2})
\int_0^r\frac{r^{\frac{1}{2}-H}h^\xi(r)-s^{\frac{1}{2}-H}h^\xi(s)}
{(r-s)^{H+\frac{1}{2}}}\d s\nonumber\\
&=\frac{r^{\frac{1}{2}-H}h^\xi(r)}{\Gamma(\frac{3}{2}-H)}
+\frac{r^{H-\frac{1}{2}}}{\Gamma(\frac{3}{2}-H)}(H-\frac{1}{2})
\int_0^r\frac{(r^{\frac{1}{2}-H}-s^{\frac{1}{2}-H})h^\xi(r)}
{(r-s)^{H+\frac{1}{2}}}\d s\nonumber\\
&\qquad +\frac{r^{H-\frac{1}{2}}}{\Gamma(\frac{3}{2}-H)}(H-\frac{1}{2})
\int_0^r\frac{s^{\frac{1}{2}-H}(h^\xi(r)-h^\xi(s))}
{(r-s)^{H+\frac{1}{2}}}\d s\nonumber\\
&=[1+C_0(H-\frac{1}{2})]\frac{r^{\frac{1}{2}-H}h^\xi(r)}{\Gamma(\frac{3}{2}-H)}
+\frac{r^{H-\frac{1}{2}}}{\Gamma(\frac{3}{2}-H)}(H-\frac{1}{2})
\int_0^r\frac{s^{\frac{1}{2}-H}(h^\xi(r)-h^\xi(s))}
{(r-s)^{H+\frac{1}{2}}}\d s\nonumber\\
&=: \hat J_1(r)+\hat J_2(r).
\end{align}
For $\hat J_1$, it follows from  (H1) and (H2) that 
\begin{align*}
|h^\xi(r)|
&\le\|\hat{\sigma}^{-1}\||b(Y^\xi(r))-b(Y^\xi(r_\delta))|+|Z(\hat{Y}_r^\xi)|\\
&\le\|\hat{\sigma}^{-1}\|L_1\|Y^\xi\|_{0,r,\beta}\delta^\beta+|Z(0)|+L_2\|Y^\xi\|_{-\tau,r,\8}^\alpha.
\end{align*}
For $\hat J_2$, note that
\begin{align*}
&\int_0^r\frac{s^{\frac{1}{2}-H}|h^\xi(r)-h^\xi(s)|}
{(r-s)^{H+\frac{1}{2}}}\d s\\
=&\int_0^r\frac{s^{\frac{1}{2}-H}
|\hat{\sigma}^{-1}(b(Y^\xi(r))-b(Y^\xi(r_\delta)))-Z(\hat{Y}_r^\xi)-
\hat{\sigma}^{-1}(b(Y^\xi(s))-b(Y^\xi(s_\delta)))+Z(\hat{Y}_s^\xi))|}
{(r-s)^{H+\frac{1}{2}}}\d s\\
\le&\int_0^r\frac{\|\hat{\sigma}^{-1}\|s^{\frac{1}{2}-H}
|b(Y^\xi(r))-b(Y^\xi(r_\delta))-
(b(Y^\xi(s))-b(Y^\xi(s_\delta)))|}
{(r-s)^{H+\frac{1}{2}}}\d s\\
&~~~~+\int_0^r\frac{s^{\frac{1}{2}-H}
|Z(\hat{Y}_r^\xi)-Z(\hat{Y}_s^\xi))|}
{(r-s)^{H+\frac{1}{2}}}\d s\\
&=: I_1(r)+I_2(r).
\end{align*}

Next, we shall give the estimate of $I_i(r), i=1,2$. For $I_1(r)$, it follows from (H1) that
\begin{align*}
&|b(Y^\xi(r))-b(Y^\xi(r_\delta))-
(b(Y^\xi(s))-b(Y^\xi(s_\delta)))|\\
&\qquad\le 2L_1\|Y^\xi\|_{0,r,\beta}\Big[\delta^\beta\wedge\frac{(r-s)^\beta+(r_\delta-s_\delta)^\beta}{2}\Big]\\
&\qquad=L_1\|Y^\xi\|_{0,r,\beta}\left\{
             \begin{array}{lr}
            {(r-s)^\beta}, & r_\delta<s<r,   \\
             {(r-s)^\beta+(r_\delta-s_\delta)^\beta}, & r-\delta<s<r_\delta,\\
             2\delta^\beta, &  0<s<r-\delta.\\
             \end{array}
\right.
\end{align*}
Since
\begin{align*}
|r_\delta-s_\delta|=|[\frac{r}{\delta}]\delta-[\frac{s}{\delta}]\delta|\le|[\frac{r}{\delta}]\delta-[\frac{r-\delta}{\delta}]\delta|
\le\delta,    ~~~~r-\delta<s<r_\delta,
\end{align*}
and for $r\geq \delta$
\begin{align*}
&\int_{r_\delta}^r\frac{s^{1/2-H}}{(r-s)^{H+1/2-\beta}}\d s
\le\frac{2\delta^{1/2+\beta-H}}{1+2\beta-2H} {r_\delta^{1/2-H}},\\
&\int_0^{r_\delta}\frac{2\delta^\beta}{(r-s)^{H+1/2-\beta}s^{H-1/2}}\d s\\
&=\int_{r_\delta/2}^{r_\delta} \frac {2\delta^\beta} {(r-s)^{1/2+H}s^{H-1/2}}\d s+\int_0^{r_\delta/2}\frac{2\delta^\beta} {(r-s)^{1/2+H}s^{H-1/2}}\d s\\
&\le\frac {2\delta^\beta(r-r_\delta)^{\frac 1 2-H}} {(r_\delta/2)^{H-1/2}(H-1/2)}+\frac {2\delta^\beta\left(\frac {r_\delta} 2\right)^{\frac 3 2-H}} {\left(r-r_\delta/2\right)^{H+1/2}\left(\frac 3 2-H\right)},
\end{align*}
we have
\begin{align*}
|I_1(r)|&\le2L_1\|\hat{\sigma}^{-1}\|\|Y^\xi\|_{0,r,\beta} \left\{\left[
\frac{\delta^{\beta+1/2-H}}{(2\beta+1-2H)r_\delta^{H-1/2}}+
\frac {\delta^\beta(r-r_\delta)^{\frac 1 2-H}} {(r_\delta/2)^{H-1/2}(H-1/2)}\right.\right.\\
& \left.\left. +\frac {\delta^\beta\left(\frac {r_\delta} 2\right)^{\frac 3 2-H}} {\left(r-r_\delta/2\right)^{H+1/2}\left(\frac 3 2-H\right)}\right]\1_{[r\geq \delta]}+\frac 1 2\mathcal{B}(\frac 3 2-H, \beta+\frac 1 2-H) r^{\beta+1-2H}\1_{[0\leq r<\delta]} \right\}.
\end{align*}
We now calculate $I_2(r).$  One can see that
\begin{align*}
&\|\hat{Y}_r^\xi-\hat{Y}_s^\xi\|^\alpha\\
&=\sup_{-\tau\le u\le0}\frac{|Y^\xi((r+u)\wedge r_\delta)-Y^\xi((s+u)\wedge s_\delta)|^\alpha}
{|(r+u)\wedge r_\delta-(s+u)\wedge s_\delta|^{\alpha(\beta\wedge\theta)}}
|(r+u)\wedge r_\delta-(s+u)\wedge s_\delta|^{\alpha(\beta\wedge\theta)}\\
&\le\|Y^\xi\|_{-\tau,r,\beta\wedge\theta}^\alpha
\sup_{-\tau\le u\le0}|(r+u)\wedge r_\delta-(s+u)\wedge s_\delta|^{\alpha(\beta\wedge\theta)}.
\end{align*}
Since for $s+u>s_\delta$ and $r+u<r_\delta$, we have
$$(s+u)\wedge s_\delta=s_\delta;\qquad  (r+u)\wedge r_\delta=r-u;\qquad s_\delta-s<u<r_\delta-r.$$
Then 
$$\sup_{s_\delta-s<u<r_\delta-r}\left|(r+u)\wedge r_\delta-(s+u)\wedge s_\delta\right|=\sup_{s_\delta-s<u<r_\delta-r}\left|r+u - s_\delta\right|=|r_\delta-s_\delta|.$$
Similarly, for $s+u<s_\delta$ and $r+u>r_\delta$, we have
$$\sup_{r_\delta-r<u<s_\delta-s}\left|(r+u)\wedge r_\delta-(s+u)\wedge s_\delta\right|=|r_\delta-s_\delta|.$$
Then it is easy to see that 
$$\sup_{u\in [-\tau,0]}\left|(r+u)\wedge r_\delta-(s+u)\wedge s_\delta\right|=(t-s)\vee(t_\delta-s_\delta).$$
Consequently
$$\|\hat{Y}_r^\xi-\hat{Y}_s^\xi\|^\alpha\leq \|Y^\xi\|_{-\tau,r, \beta\wedge\theta }^\alpha((t-s)\vee(t_\delta-s_\delta))^{\alpha(\beta\wedge\theta)},$$
and 
\begin{align*}
I_2(r)&=\int_0^r\frac{s^{\frac{1}{2}-H}
|Z(\hat{Y}_r^\xi)-Z(\hat{Y}_s^\xi))|}
{(r-s)^{H+\frac{1}{2}}}\d s\\
&\le\int_0^r\frac{L_2\|\hat{Y}_r^\xi-\hat{Y}_s^\xi\|^\alpha}{(r-s)^{1/2+H}s^{H-1/2}}\d s\\
&\le L_2\|Y\|_{-\tau,r, \beta\wedge\theta }^\alpha\int_0^r\ff {(t-s)^{\alpha(\beta\wedge\theta)}\vee(t_\delta-s_\delta)^{\alpha(\beta\wedge\theta)}} {(r-s)^{1/2+H}s^{H-1/2}}\d s.
\end{align*}
Since $r_\delta-s_\delta=0$ for $s\in [r_\delta,r]$, 
\begin{align*}
\int_0^r\ff {(r-s)^{\alpha(\beta\wedge\theta)}\vee(r_\delta-s_\delta)^{\alpha(\beta\wedge\theta)}} {(r-s)^{1/2+H}s^{H-1/2}}\d s&= \int_0^r\ff {(r-s)^{\alpha(\beta\wedge\theta)}} {(r-s)^{1/2+H}s^{H-1/2}}\mathbf{1}_{[r-s\geq r_\delta-s_\delta]}\d s\\
&+\int_0^{r_\delta}\ff {(r_\delta-s_\delta)^{\alpha(\beta\wedge\theta)}} {(r-s)^{1/2+H}s^{H-1/2}}\mathbf{1}_{[r-s< r_\delta-s_\delta]}\d s.
\end{align*}
For $r- r_\delta+s_\delta<s$, it is clear that $r_\delta-s_\delta-(r-s)\leq \delta$, so
$$(r_\delta-s_\delta)^{\alpha(\beta\wedge\theta)}=(r_\delta-s_\delta-r+s+(r-s))^{\alpha(\beta\wedge\theta)}\leq (r-s)^{\alpha(\beta\wedge\theta)}+\delta^{\alpha(\beta\wedge\theta)},$$
which implies that 
\begin{align*}
\int_0^{r_\delta}\ff {(r_\delta-s_\delta)^{\alpha(\beta\wedge\theta)}} {(r-s)^{1/2+H}s^{H-1/2}}\mathbf{1}_{[r-s< r_\delta-s_\delta]}\d s\leq \int_0^{r_\delta}\ff {(t-s)^{\alpha(\beta\wedge\theta)}+\delta^{\alpha(\beta\wedge\theta)}} {(r-s)^{1/2+H}s^{H-1/2}}\mathbf{1}_{[r-s< r_\delta-s_\delta]}\d s.
\end{align*} 
Moreover, we have 
\begin{align}\label{ineq-de-sr}
&\int_{0}^T\left(\int_0^{r_\delta}\frac {\delta^{\alpha(\beta\wedge\theta)}\mathbf{1}_{[t-s<t_\delta-s_\delta]}} {(r-s)^{1/2+H}s^{H-1/2}}\d s\right)^2\d r\leq \int_{\delta}^T\left(\int_0^{r_\delta}\frac {\delta^{\alpha(\beta\wedge\theta)}} {(r-s)^{1/2+H}s^{H-1/2}}\d s\right)^2\d r\nonumber\\
&\qquad \leq \sum_{k=1}^{N-1}\int_{k\delta}^{(k+1)\delta}\left(\frac {\delta^{\alpha(\beta\wedge\theta)}\left(\frac {r_\delta} 2\right)^{\frac 3 2-H}} {\left(r-r_\delta/2\right)^{H+1/2}\left(\frac 3 2-H\right)}+\frac {\delta^{\alpha(\beta\wedge\theta)}(r-r_\delta)^{\frac 1 2-H}} {(r_\delta/2)^{H-1/2}(H-1/2)}\right)^2\d r\nonumber\\
&\qquad \leq 2\delta^{2\alpha(\beta\wedge\theta)}\sum_{k=1}^{N-1}\left(\frac {16^{H}(k\delta)^{2-4H}\delta} {(3-2H)^2}+\frac {2^{2H-1}\delta^{2-2H}}{(H-1/2)^2(k\delta)^{2H-1}}\right)\nonumber\\
&\qquad \leq 2\delta^{2\alpha(\beta\wedge\theta)+1-2H}\left(\frac {16^H} {(3-2H)^2}+\frac {2^{2H+1}} {(2H-1)^2}\right)\sum_{k=1}^{N-1}(k\delta)^{1-2H}\delta \nonumber\\
&\qquad \leq \frac{\delta^{2\alpha(\beta\wedge\theta)+1-2H}}{1-H}\left(\frac {16^H} {(3-2H)^2}+\frac {2^{2H+1}} {(2H-1)^2}\right)T^{2-2H}
\end{align}
and 
\begin{align}\label{add-ineq-sr1}
&\int_0^T\left(\int_0^r\ff {(r-s)^{\alpha(\beta\wedge\theta)}} {(r-s)^{1/2+H}s^{H-1/2}}\d s\right)^2\d r
=\frac {T^{2\alpha(\beta\wedge\theta)+3-4H}\mathcal{B}^2(\frac 3 2-H,\alpha(\beta\wedge\theta)+\frac 1 2-H)} {2\alpha(\beta\wedge\theta)+3-4H}.
\end{align}
Substituting  $\hat J_1$, $I_1(r)$ and $I_2(r)$ into \eqref{eqh1}, and taking into account \eqref{ineq-de-sr} and \eqref{add-ineq-sr1}, we arrive at 
\begin{align}\label{add-ineq-hK}
&\int_0^T\Big|K_H^{-1}\Big(\int_0^\cdot h^\xi(s)\d s\Big)(r)\Big|^2\d r\nonumber\\
&\qquad\le\frac{8L_1^2\|\hat{\sigma}^{-1}\|^2\|Y^\xi\|_{\beta}^2}{\Gamma^2({\frac{3}{2}-H)}(1-H)}\Big(\left[1+C_0(H-\frac{1}{2})\right]^2T^{2-2H}\delta^{2\beta}\nonumber\\
&\qquad\qquad+8\delta^{2\beta+1-2H}T\Big[
\frac{1}{(1+2\beta-2H)^2}+\frac{16^H}{(3-2H)^2}+\frac{2^{2H+1}}{(2H-1)^2}
\Big]+\frac{4(1-H)\delta^{2\beta+4-4H}}{(\beta+2-2H)^2}\Big)\nonumber\\
&\qquad\qquad+\frac{2[1+C_0(H-\frac{1}{2})]^2|Z(0)|^2T^{2(1-H)}}{\Gamma^2(\frac{3}{2}-H)(1-H)}
+\frac{2L_2^2[1+C_0(H-\frac{1}{2})]^2T^{2-2H}\|Y^\xi\|_{-\tau,T,\8}^{2\alpha}}{\Gamma^2(\frac{3}{2}-H)(1-H)}\nonumber\\
&\qquad\qquad+\frac{12L_2^2\|Y^\xi\|_{-\tau,T, \beta\wedge\theta }^{2\alpha}}{\Gamma^2(\frac{3}{2}-H)}
\Big[\frac{\mathcal{B}^2(\frac{3}{2}-H,\alpha(\beta\wedge\theta)+1/2-H)T^{2\alpha(\beta\wedge\theta)+3-4H}}{2\alpha(\beta\wedge\theta)+3-4H}\nonumber\\
&\qquad\qquad+\frac{\delta^{2\alpha(\beta\wedge\theta)+1-2H}T^{2-2H}}{1-H}
\Big(\frac{16^H}{(3-2H)^2}+\frac{2^{2H+1}}{(2H-1)^2}
\Big)\Big].
\end{align}
Therefore, it follows from Lemma \ref{lem1.2} and \eqref{eq2.5} that there exists $C>1$ such that
\begin{align*}
&\E\exp\Big\{C\int_0^T\Big|K_H^{-1}\Big(\int_0^\cdot h^\xi(s)\d s\Big)(r)\Big|^2\d r\Big\}<\infty.
\end{align*}

\end{proof}

We are now in the position to complete the 

\noindent{\bf Proof of Theorem \ref{thm1}}.  Let
\begin{align*}
&M_1(t)=\int_0^t\Big\langle K_H^{-1}\Big(\int_0^\cdot Z(Y_s^\xi)\d s\Big)(r),\d B(r)\Big\rangle,\\
&M_2(t)=\int_0^t\Big\langle K_H^{-1}\Big(\int_0^\cdot h^\xi(s)\d s\Big)(r),d B(r)\Big\rangle,  ~~~t\ge0.
\end{align*}
For $f\in\mathscr{B}_b(\R^d)$, following from the weak uniqueness of solution to \eqref{eq1.1}, the  H\"older's inequality and the following inequality  
$$|\e^x-\e^y|\le(\e^x\vee \e^y)|x-y|,$$
we have
\begin{align}\label{eq1.12}
|\E f(X(t))-\E f(X^{(\delta)}(t))|&=|\E_{\Q^\xi}f(Y^\xi(t))-\E_{\Q^{\xi,\delta}}f(Y^\xi(t))|\nonumber\\
&=|\E(R^\xi(t)-R^{\xi,\delta}(t))f(Y^\xi(t))|\nonumber\\
&\le\|f\|_\8\E|R^\xi(t)-R^{\xi,\delta}(t)|\nonumber\\
&\le\|f\|_\8\E\left(R^\xi(t)\vee R^{\xi,\delta}(t)\right)
\left|\log R^\xi(t)-\log R^{\xi,\delta}(t)\right|\nonumber
\\
&\leq \|f\|_\8\Theta_1(t)(\Theta_2(t)+\Theta_3(t)), t\in[0,T],
\end{align}
where 
\begin{align*}
\Theta_1(t)&=\left(\E (R^\xi(t))^q\right)^{\frac 1 q}+\left(\E (R^{\xi,\delta}(t))^q\right)^{\frac 1 q},\\
\Theta_2(t)&=\left(\E\left|\int_0^t\<K_H^{-1}\left(\int_0^\cdot(Z(Y^\xi_s)+h^\xi(s))\d s\right)(r),\d B(r) \>\right|^{\frac {q} {q-1}}\right)^{\frac {q-1} q},\\
\Theta_3(t)&= \frac 1 2\left(\E\left|\int_0^t\left(\left|K_H^{-1}\left(\int_0^\cdot Z(Y^\xi_s)\d s\right)(r)\right|^2-\left|K_H^{-1}\left(\int_0^\cdot h^\xi(s)\d s\right)(r)\right|^2\right)\d r\right|^{\frac q {q-1}}\right)^{\frac {q-1} q},~q>1.
\end{align*}

It follows from \eqref{add-fZ} and Lemma \ref{lem-Eexph} with $C_5=0$ and $C_4=L_2$ that there is some $C>1$ such that $\E\exp\{C\<M_1\>(T)\}<\infty$. Thus,  for $2q^2-q\leq C$, we have 
\begin{align*}
\E(R^\xi(t))^q&=\E\exp\Big(qM_1(t)-q^2\langle M_1\rangle(t)+(q^2-q/2)\langle M_1\rangle(t)\Big)\\
&\le(\E\exp(2qM_1(t)-2q^2\langle M_1\rangle(t)))^{1/2}
\big(\E\exp((2q^2-q)\langle M_1\rangle(t))\big)^{1/2}\\
&\le \Big(\E\exp\Big((2q^2-q)\int_0^t\Big|K_H^{-1}\Big(\int_0^\cdot  Z(Y_s^\xi)\d s\Big)(r)\Big|^2\d r\Big)\Big)^{1/2}\\
&<\infty.
\end{align*}
Similarly, following from Lemma \ref{lem1.35}, there is  $q>1$ such that
$$\sup_{t\in [0,T]}\left(\E(R^{\xi,\delta}(t))^q\right)^{\frac 1 q}<\infty.$$
Hence,  there is $q>1$ and   some constant $C_{T}$ such that
\begin{align}\label{eq2.6}
\Theta_1(t)\le C_{T}.
\end{align}
In the following proof, we fix some $q>1$ such that \eqref{eq2.6} holds.

It is easy to see that
\begin{align}\label{add-ineq-Th2}
&K_H^{-1}\Big(\int_0^\cdot( Z(Y_s^\xi)+h^\xi(s))\d s\Big)(r)=r^{H-\frac{1}{2}}D_{0+}^{H-\frac{1}{2}}[\cdot^{\frac{1}{2}-H}( Z(Y_\cdot^\xi)+h^\xi(\cdot))](r)\nonumber\\
&\qquad=\frac{r^{H-\frac{1}{2}}}{\Gamma(\frac{3}{2}-H)}
\Big(\frac{r^{1/2-H}( Z(Y_r^\xi)+h^\xi(r))}{r^{H-1/2}}\nonumber\\
&\qquad\qquad+(H-\frac{1}{2})\int_0^r\frac{r^{1/2-H}( Z(Y_r^\xi)+h^\xi(r))-s^{1/2-H}( Z(Y_s^\xi)+h^\xi(s))}{(r-s)^{H+1/2}}\d s\Big)\nonumber\\
&\qquad\le[1+C_0(H-\frac{1}{2})]\frac{r^{\frac{1}{2}-H}( Z(Y_r^\xi)+h^\xi(r))}{\Gamma(\frac{3}{2}-H)}\nonumber\\
&\qquad\qquad+\frac{r^{H-\frac{1}{2}}}{\Gamma(\frac{3}{2}-H)}(H-\frac{1}{2})
\int_0^r\frac{s^{\frac{1}{2}-H}( Z(Y_r^\xi)+h^\xi(r)- Z(Y_s^\xi)-h^\xi(s))}
{(r-s)^{H+\frac{1}{2}}}\d s\nonumber\\
&\qquad=: I_3(r)+I_4(r).
\end{align}
Next, we give the estimates for  $I_i(r), i=3,4$, respectively.
  (H1) and (H2) yields that
\begin{align}\label{add-ineq-I3}
|I_3(r)|&\le[1+C_0(H-\frac{1}{2})]\frac{r^{\frac{1}{2}-H}}{\Gamma(\frac{3}{2}-H)}
 (\|\hat{\sigma}^{-1}\||b(Y^\xi(r))-b(Y^\xi(r_\delta))|+\|Z(Y_r^\xi)-Z(\hat{Y}_r^\xi)\|)\nonumber\\
&\le[1+C_0(H-\frac{1}{2})]\frac{r^{\frac{1}{2}-H}}{\Gamma(\frac{3}{2}-H)}
 (L_1\|\hat{\sigma}^{-1}\|\|Y^\xi\|_{0,r,\beta}\delta^\beta+L_2\|Y_r^\xi-\hat{Y}_r^\xi\|^\alpha)\nonumber\\
&\le \frac{\left[1+C_0(H-\frac{1}{2})\right]r^{\frac{1}{2}-H}}{\Gamma(\frac{3}{2}-H)}
\left(L_1\|\hat{\sigma}^{-1}\|\|Y^\xi\|_{0,r,\beta}\delta^\beta
+ L_2\|Y^\xi\|_{0,r,\beta}^\alpha\delta^{\alpha\beta}\right).
\end{align}
Moreover, we have
\begin{align}\label{add-ineq-I4}
|I_4(r)|&\le\frac{r^{H-\frac{1}{2}}}{\Gamma(\frac{3}{2}-H)}(H-\frac{1}{2})
\int_0^r\frac{s^{\frac{1}{2}-H}(Z(Y_r^\xi)+h^\xi(r)- Z(Y_s^\xi)-h^\xi(s))}
{(r-s)^{H+\frac{1}{2}}}\d s\nonumber\\
&\le\frac{r^{H-\frac{1}{2}}}{\Gamma(\frac{3}{2}-H)}(H-\frac{1}{2})\|\hat{\sigma}^{-1}\| \int_0^r\frac{s^{1/2-H}|b(Y^\xi(r))-b(Y^\xi(r_\delta))-(b(Y^\xi(s))-b(Y^\xi(s_\delta)))|}{(r-s)^{H+1/2}}\d s\nonumber\\
&~~+\frac{r^{H-\frac{1}{2}}}{\Gamma(\frac{3}{2}-H)}(H-\frac{1}{2})\int_0^r\frac{s^{1/2-H}\|Z(Y_r^\xi)-Z(\hat{Y}_r^\xi)-(Z(Y_s^\xi)-Z(\hat{Y}_s^\xi))\|}{(r-s)^{H+1/2}}\d s\nonumber\\
&=I_{41}(r)+I_{42}(r).
\end{align}
In the same way to estimate $I_1$ in the proof of Lemma \ref{lem1.35}, we have
\begin{align}\label{add-ineq-I41}
I_{41}(r)&\le2\frac{r^{H-\frac{1}{2}}}{\Gamma(\frac{3}{2}-H)}(H-\frac{1}{2})L_1\|\hat{\sigma}^{-1}\|\|Y^\xi\|_{0,r,\beta}\left\{\Big[
\frac{\delta^{\beta+1/2-H}}{(2\beta+1-2H)r_\delta^{H-1/2}}+
\frac {\delta^\beta(r-r_\delta)^{\frac 1 2-H}} {(r_\delta/2)^{H-1/2}(H-1/2)}\right.\nonumber\\
&~~\left. +\frac {\delta^\beta\left(\frac {r_\delta} 2\right)^{\frac 3 2-H}} {\left(r-r_\delta/2\right)^{H+1/2}\left(\frac 3 2-H\right)}
\Big]\1_{[ r\ge \delta]}+\mathcal{B}(\beta+\frac 1 2-H,\frac 3 2-H) r^{\beta+1-2H}\1_{[0\leq r<\delta]} \right\}.
\end{align}
On the other hand,  it follows from (H2) that
\begin{align*}
\|Z(Y_r^\xi)-Z(\hat{Y}_r^\xi)-(Z(Y_s^\xi)-Z(\hat{Y}_s^\xi))\|&\le L_2\|Y_r^\xi-\hat{Y}_r^\xi\|^\alpha+L_2\|Y_s^\xi-\hat{Y}_s^\xi\|^\alpha\\
&\le 2L_2\|Y\|_{-\tau,r, \beta\wedge\theta }^\alpha\delta^{\alpha(\beta\wedge\theta)},
\end{align*}
and
\begin{align*}
&\|Z(Y_r^\xi)-Z(\hat{Y}_r^\xi)-(Z(Y_s^\xi)-Z(\hat{Y}_s^\xi))\|\\
&\qquad\le L_2\|Y_r^\xi-Y_s^\xi\|^\alpha+L_2\|\hat{Y}_r^\xi-\hat{Y}_s^\xi\|^\alpha\\
&\qquad\le L_2\|Y\|_{-\tau,r, \beta\wedge\theta }^\alpha|r-s|^{\alpha(\beta\wedge\theta)}+L_2\|Y\|_{-\tau,r, \beta\wedge\theta }^\alpha\Big(|r-s|^{\alpha\beta}\vee |r_\delta-s_\delta|^{\alpha(\beta\wedge\theta)}\Big)\\
&\qquad=L_2\|Y\|_{-\tau,r, \beta\wedge\theta }^\alpha\Big(|r-s|^{\alpha(\beta\wedge\theta)}+|r-s|^{\alpha(\beta\wedge\theta)}\vee |r_\delta-s_\delta|^{\alpha(\beta\wedge\theta)}\Big).
\end{align*}
Combining these two upper bounds together, we have
\begin{align*}
&\|Z(Y_r^\xi)-Z(\hat{Y}_r^\xi)-(Z(Y_s^\xi)-Z(\hat{Y}_s^\xi))\|\\
&\qquad\le 2L_2\|Y^\xi\|_{-\tau,r,\beta}^\alpha
\Big(\delta^{\alpha(\beta\wedge\theta)}\wedge\frac{|r-s|^{\alpha(\beta\wedge\theta)}+|r-s|^{\alpha(\beta\wedge\theta)}\vee |r_\delta-s_\delta|^{\alpha\beta}}{2}\Big).
\end{align*}
Since 
$$\delta^{\alpha(\beta\wedge\theta)}\wedge\frac{|r-s|^{\alpha(\beta\wedge\theta)}+|r-s|^{\alpha\beta}\vee |r_\delta-s_\delta|^{\alpha(\beta\wedge\theta)}}{2}=\delta^{\alpha(\beta\wedge\theta)},s\in [0,r_\delta],$$
we get
\begin{align*}
&\int_0^{r_\delta}\frac {\delta^{\alpha(\beta\wedge\theta)}\wedge\frac{|r-s|^{\alpha(\beta\wedge\theta)}+|r-s|^{\alpha(\beta\wedge\theta)}\vee |r_\delta-s_\delta|^{\alpha(\beta\wedge\theta)}}{2}} {(r-s)^{H+1/2}s^{H-1/2}}\d s\\
&\qquad=\int_0^{r_\delta}\frac {\delta^{\alpha(\beta\wedge\theta)}} {(r-s)^{H+1/2}s^{H-1/2}}\d s\\
&\qquad\le\frac {\delta^{(\beta\wedge\theta)}(r-r_\delta)^{\frac 1 2-H}} {(r_\delta/2)^{H-1/2}(H-1/2)}+\frac {\delta^{(\beta\wedge\theta)}\left(\frac {r_\delta} 2\right)^{\frac 3 2-H}} {\left(r-r_\delta/2\right)^{H+1/2}\left(\frac 3 2-H\right)}
\end{align*}
and
\begin{align*}
\int_{r_\delta}^r\frac {\delta^{\alpha{(\beta\wedge\theta)}}\wedge\frac{|r-s|^{\alpha{(\beta\wedge\theta)}}+|r-s|^{\alpha{(\beta\wedge\theta)}}\vee |r_\delta-s_\delta|^{\alpha{(\beta\wedge\theta)}}}{2}} {(r-s)^{H+1/2}s^{H-1/2}}\d s & =\int_{r_\delta}^r\frac {(r-s)^{\alpha{(\beta\wedge\theta)}}} {(r-s)^{H+1/2}s^{H-1/2}}\d s\\
&\leq \frac {r_\delta^{\frac 1 2-H}(r-r_\delta)^{\alpha{(\beta\wedge\theta)}+\frac 1 2-H}} {\alpha{(\beta\wedge\theta)}+\frac 1 2-H}\\
&\leq \frac {2r_\delta^{\frac 1 2-H}\delta^{\alpha{(\beta\wedge\theta)}+\frac 1 2-H}} {2\alpha{(\beta\wedge\theta)}+1-2H}.
\end{align*}
Thus, 
\begin{align}\label{add-ineq-I42}
&|I_{42}(r)|\le\frac{2 L_2(H-\frac{1}{2}) r^{H-\frac{1}{2}}}{\Gamma(\frac{3}{2}-H)}\|Y^\xi\|_{-\tau,r,{ \beta\wedge\theta }}^\alpha\left\{\Big(
\frac {\delta^{(\beta\wedge\theta)}(r-r_\delta)^{\frac 1 2-H}} {(r_\delta/2)^{H-1/2}(H-1/2)}+\frac {\delta^{(\beta\wedge\theta)}\left(\frac {r_\delta} 2\right)^{\frac 3 2-H}} {\left(r-r_\delta/2\right)^{H+1/2}\left(\frac 3 2-H\right)}\right.\nonumber\\
&\left. +\frac {2r_\delta^{\frac 1 2-H}\delta^{\alpha{(\beta\wedge\theta)}+\frac 1 2-H}} {2\alpha{(\beta\wedge\theta)}+1-2H}\Big)\1{[r\geq \delta]}+\mathcal{B}(\frac 3 2-H,\alpha(\beta\wedge\theta)+\frac 1 2-H)r^{\alpha(\beta\wedge\theta)+\frac 1 2-H}\1_{[0\leq r<\delta]}\right\}.
\end{align}
Substituting \eqref{add-ineq-I41}, \eqref{add-ineq-I42}, \eqref{add-ineq-I3} and \eqref{add-ineq-I4} into \eqref{add-ineq-Th2}, we arrive at
\begin{align}\label{add-Th2-3}
&\Big|K_H^{-1}\Big(\int_0^\cdot( Z(Y_s^\xi)+h^\xi(s))\d s\Big)(r)\Big|\nonumber\\
&\qquad\le[1+C_0(H-\frac{1}{2})]\frac{r^{\frac{1}{2}-H}}{\Gamma(\frac{3}{2}-H)}
 (L_1\|\hat{\sigma}^{-1}\|\|Y^\xi\|_{0,r,\beta}\delta^\beta
+L_2\|Y^\xi\|_{0,r,\beta}^\alpha\delta^{\alpha\beta})\nonumber\\
&\qquad\qquad+\frac{r^{H-1/2}(2H-1)}{\Gamma(\frac{3}{2}-H)}\Big(L_1\|\hat{\sigma}^{-1}\|\|Y^\xi\|_{0,r,\beta}\Big[
\frac{\delta^{\beta+1/2-H}}{(2\beta+1-2H)r_\delta^{H-1/2}}+
\frac {\delta^\beta(r-r_\delta)^{\frac 1 2-H}} {(r_\delta/2)^{H-1/2}(H-1/2)}\nonumber\\
&\qquad\qquad+\frac {\delta^\beta\left(\frac {r_\delta} 2\right)^{\frac 3 2-H}} {\left(r-r_\delta/2\right)^{H+1/2}\left(\frac 3 2-H\right)}
\Big]
+ L_2\|Y^\xi\|_{-\tau,r,{(\beta\wedge\theta)}}^\alpha\Big[
\frac {\delta^{(\beta\wedge\theta)}(r-r_\delta)^{\frac 1 2-H}} {(r_\delta/2)^{H-1/2}(H-1/2)}\nonumber\\
&\qquad\qquad+\frac {\delta^{(\beta\wedge\theta)}\left(\frac {r_\delta} 2\right)^{\frac 3 2-H}} {\left(r-r_\delta/2\right)^{H+1/2}\left(\frac 3 2-H\right)}+\frac {2r_\delta^{\frac 1 2-H}\delta^{\alpha{(\beta\wedge\theta)}+\frac 1 2-H}} {2\alpha{(\beta\wedge\theta)}+1-2H}\Big]\Big)\1{[r\geq \delta]}\nonumber\\
&\qquad\qquad +C_T\left(\|\|Y^\xi\|_{0,r,\beta}r^{\beta+1-2H}+\|Y^\xi\|_{-\tau,r,{ \beta\wedge\theta }}^\alpha r^{\alpha(\beta\wedge\theta)+1-2H}\right)\1_{[0\leq r<\delta]}.
\end{align}
By the B-D-G inequality, 
\begin{align*}
\Theta_2(t)&\le C_T\Big(\E\left(\int_0^T\Big|K_H^{-1}\Big(\int_0^\cdot( Z(Y_s^\xi)+h^\xi(s))\d s\Big)(r)\Big|^2\d r\right)^{\frac {q} {(q-1)2}}\Big)^{\frac {q-1} q}\nonumber\\
&\le C_T\delta^{\alpha{(\beta\wedge\theta)}+\frac{1}{2}-H}.
\end{align*}

For $\Theta_3$, it follows from  H\"older's inequality and    \eqref{add-Th2-3} that 
\begin{align*}
\Theta_3(t)&\leq \frac 1 2\left(\E\left(\int_0^T\left|K_H^{-1}\left(\int_0^\cdot (Z(Y^\xi_s)-h^\xi(s))\d s\right)(r)\right|^2\d r\right)^{\frac {q} {q-1}}\right)^{\frac {q-1} {2q}}\\
&\qquad \times \Big(\E\left(\int_0^T\Big|K_H^{-1}\Big(\int_0^\cdot( Z(Y_s^\xi)+h^\xi(s))\d s\Big)(r)\Big|^2\d r\right)^{\frac {q} {q-1}}\Big)^{\frac {q-1} {2q}}\\
&\le C_T\delta^{\alpha{(\beta\wedge\theta)}+\frac{1}{2}-H}
\left(\E\left(\int_0^T\left|K_H^{-1}\left(\int_0^\cdot (Z(Y^\xi_s)-h^\xi(s))\d s\right)(r)\right|^2\d r\right)^{\frac {q} {q-1}}\right)^{\frac {q-1} {2q}}.
\end{align*}
Since
\beg{align*}
\int_0^T \left|K_H^{-1}\left(\int_0^\cdot (Z(Y^\xi_s)-h^\xi(s))\d s\right)(r)\right|^2\d r&\leq 2\int_0^T \left|K_H^{-1}\left(\int_0^\cdot (Z(Y^\xi_s)+h^\xi(s))\d s\right)(r)\right|^2\d r\\
&\qquad +2\int_0^T \left|K_H^{-1}\left(\int_0^\cdot  h^\xi(s)\d s\right)(r)\right|^2\d r,
\end{align*}
it follows from \eqref{add-Th2-3} and \eqref{add-ineq-hK} that 
$$\left(\E\left(\int_0^T\left|K_H^{-1}\left(\int_0^\cdot (Z(Y^\xi_s)-h^\xi(s))\d s\right)(r)\right|^2\d r\right)^{\frac {q} {q-1}}\right)^{\frac {q-1} {2q}}<\infty.$$
Hence, 
$$\Theta_3(t)\leq C_T \delta^{\alpha{(\beta\wedge\theta)}+\frac{1}{2}-H}.$$

Finally, the desired assertion is established from \eqref{eq1.12} and the estimates of $\Theta_i(t), i=1,2,3.$

\end{document}